\newcommand{\ds}{\displaystyle}
\newcommand{\dss}{\displaystyle\sum}
\newcommand{\lp}{\left(}
\newcommand{\rp}{\right)}
\title{Outerplanar graphs with positive Lin--Lu--Yau curvature}
\author{George Brooks
	\thanks{University of South Carolina, Columbia, SC, USA ({\tt ghbrooks@email.sc.edu})}
	\quad
	Fadekemi Osaye
	\thanks{Troy University, Troy, AL, USA ({\tt fosaye@troy.edu}) }
	\quad
	Anna Schenfisch
	\thanks{Eindhoven University of Technology, Eindhoven, Netherlands ({\tt a.k.schenfisch@tue.nl})}
	\quad
	Zhiyu Wang
	\thanks{Louisiana State University, Baton Rouge, LA, USA ({\tt zhiyuw@lsu.edu}) }
	\quad
	Jing Yu
	\thanks{Shanghai Center for Mathematical Sciences, Fudan University, Shanghai, China ({\tt jyu@fudan.edu.cn}) }
}
\begin{document}
	
\maketitle

 \abstract{In this paper, we show that all simple outerplanar graphs $G$ with minimum degree at least $2$ and positive Lin--Lu--Yau Ricci curvature on every edge have maximum degree at most $9$. Furthermore, if~$G$ is maximally outerplanar, then $G$ has at most $10$ vertices. Both upper bounds are sharp.}
 
\section{Introduction}

Ricci curvature plays a crucial role in the geometric analysis of Riemannian manifolds. Roughly speaking, it measures the degree to which the geometry of a metric tensor differs locally from that of a Euclidean space.  Curvature in the continuous setting has been studied in depth throughout the past 200 years, and more recent interest has arisen in establishing analogous results in metric spaces. The definition of the (non-combinatorial) Ricci curvature on metric spaces first came from the Bakry and \'Emery notation \cite{Bakry-Emery} who defined the ``lower Ricci curvature bound" through the heat semigroup~$(P_t)_{t\geq 0}$ on a metric measure space. Ollivier \cite{Ollivier} defined the coarse Ricci curvature of metric spaces in terms of how much small balls are closer (in Wasserstein transportation distance) than their centers are. This notion of coarse Ricci curvature on discrete spaces was also made explicit in the Ph.D. thesis of Sammer~\cite{Sammer}.
The first definition of Ricci curvature on graphs was introduced by Chung and Yau in~\cite{Chung-Yau96}. To obtain a good log-Sobolev inequality, they defined \textit{Ricci-flatness} in graphs. Later, Lin and Yau \cite{LY} gave a generalization of the lower Ricci curvature bound in the framework of graphs. In \cite{LLY}, Lin, Lu, and Yau modified Ollivier's Ricci curvature \cite{Ollivier} and defined a new variant of Ricci curvature on graphs, which does not depend on the idleness of the random walk. For other variants of curvature on discrete spaces, see e.g.,~\cite{Devriendt-Lambiotte2022, Forman2003} and the references therein.

In this paper, we are interested in studying \textit{planar} and \textit{outerplanar} graphs with positive curvature. A planar graph is a graph that can be embedded in the plane, i.e., it can be drawn on the plane in such a way that its edges intersect only at their endpoints. Infinite planar graphs are often treated as the discrete version of noncompact simply connected $2$-dimensional manifolds. Thus it's natural to consider the `curvature' of planar graphs. A notion of curvature, previously more extensively studied in graph theory, is the \textit{combinatorial curvature}, which is defined on the vertices of a graph.
Given a graph $G$ that is $2$-cell embedded in a surface without loops or multiple edges, the \textit{combinatorial curvature} of $v\in V(G)$ is defined as
$K(v):= 1-\frac{d_v}{2} + \dss_{\sigma\in F(v)} \frac{1}{|\sigma|}$,
where $d_v$ denotes the degree of $v$, $F(v)$ is the multiset of faces touching $v$ and $|\sigma|$ is the size of the face $\sigma$.  A graph $G$ is said to have positive combinatorial curvature everywhere if $\phi(v) > 0$ for all $v\in V(G)$.
Higuchi \cite{Higuchi01} conjectured that if $G$ is a simple connected graph embedded into a $2$-sphere with positive combinatorial curvature everywhere and minimum degree $\delta(G)\geq 3$, then $G$ is finite. Higuchi's conjecture was verified by Sun and Yu \cite{Sun-Yu04} for cubic planar graphs and resolved by DeVos and Mohar~\cite{DeVos-Mohar07}. In particular, DeVos and Mohar showed the following theorem.

\begin{theorem}[\cite{DeVos-Mohar07}]\label{thm:DM}
Suppose $G$ is a connected simple graph embedded into a $2$-dimensional topological manifold $\Omega$ without boundary and $G$ has minimum degree at least $3$. If $G$ has positive combinatorial curvature, then it is finite and $\Omega$ is homeomorphic to either a $2$-sphere or a projective plane. Moreover, if $G$ is not a prism, an antiprism, or one of their projective plane analogues, then $|V(G)| \leq 3444$.
\end{theorem}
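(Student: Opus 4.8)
The plan is to derive everything from a single \emph{discrete Gauss--Bonnet identity}. First I would establish, by double counting, that for any finite $2$-cell embedding
\[
\sum_{v\in V(G)} K(v) \;=\; |V(G)| - |E(G)| + |F(G)| \;=\; \chi(\Omega).
\]
Here $\sum_v 1 = |V|$; the degree term gives $\tfrac12\sum_v d_v = |E|$; and in $\sum_v\sum_{\sigma\in F(v)} \tfrac1{|\sigma|}$ each face $\sigma$ is counted once for each of its $|\sigma|$ incident corners, contributing $|\sigma|\cdot\tfrac1{|\sigma|}=1$, so this term equals $|F|$. Since every $K(v)>0$, the right-hand side is positive, and because the only closed surfaces with positive Euler characteristic are the sphere ($\chi=2$) and the projective plane ($\chi=1$), the classification of surfaces immediately pins down $\Omega$. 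In the same breath I would record the degree bound: in a $2$-cell embedding the multiset $F(v)$ has exactly $d_v$ members, each of size $\ge 3$, so $K(v)\le 1-\tfrac{d_v}{2}+\tfrac{d_v}{3}=1-\tfrac{d_v}{6}$; hence $K(v)>0$ forces $d_v\le 5$, and with $\delta(G)\ge 3$ every vertex has degree in $\{3,4,5\}$.

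The subtle point — and the reason the hypotheses single out prisms and antiprisms — is that $K(v)$ has \emph{no} uniform positive lower bound on its own. A degree-$3$ vertex whose incident faces have sizes $(4,4,n)$ has curvature $\tfrac1n$, and a degree-$4$ vertex on three triangles and one $n$-gon also has curvature $\tfrac1n$, both tending to $0$. These are exactly the vertices of the prism and antiprism over an $n$-gon, whose total curvature is $2n\cdot\tfrac1n=2=\chi(S^2)$: the entire curvature budget is spent in arbitrarily many, arbitrarily small pieces. So the heart of the matter is that \emph{arbitrarily small positive curvature requires an arbitrarily large incident face}, and a large face forces a very rigid local pattern.

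I would therefore run a discharging argument on the remaining graphs, with the goal of redistributing the charges $K(v)$ so that, unless $G$ is a prism, an antiprism, or a projective-plane analogue, every vertex ends with charge at least $\tfrac1{1722}$; summing and using $\sum_v K(v)=\chi\le 2$ would then give $|V(G)|\le 2\cdot 1722 = 3444$. The constant is not arbitrary: $1722 = 2\cdot 3\cdot 7\cdot 41$ is the denominator of the smallest positive value of $-\tfrac12 + \tfrac1a+\tfrac1b+\tfrac1c$ over $a,b,c\ge 3$, illustrated by the degree-$3$ type with face sizes $(3,7,41)$. Since $\tfrac13+\tfrac17+\tfrac1{42}=\tfrac12$ exactly, decreasing the last denominator to $41$ overshoots by precisely $\tfrac1{1722}$, the least possible overshoot — the greedy Egyptian-fraction (Sylvester-type) phenomenon, with Sylvester's sequence $2,3,7,43,1807$ supplying the nearby under-approximation. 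The other factor is $2=\max_\Omega \chi(\Omega)$. The discharging rules must pump charge from the well-curved parts of $G$ into the vertices bordering large faces, and the structural analysis of a large face should show that either its boundary closes into a full prism/antiprism tube — capping it off while keeping every vertex positively curved forces $G$ itself to be one of the exceptional families — or the tube terminates, and termination necessarily ``wastes'' enough curvature to pay for the deficient vertices.

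The main obstacle is exactly this structural/discharging core. It requires enumerating all vertex types of degree in $\{3,4,5\}$ with bounded face sizes, verifying that the discharging preserves the $\tfrac1{1722}$ threshold across every local configuration, and — most delicately — proving rigorously that no \emph{other} unbounded family can occur, i.e. that any propagation of large faces besides the prism/antiprism tubes is curvature-forbidden, including the two projective-plane analogues. Finiteness falls out of the same dichotomy: for non-exceptional $G$ all faces have bounded size, so there are only finitely many vertex types and a genuine positive lower bound on $K(v)$ applies, whence $\sum_v K(v)\le 2$ caps $|V(G)|$, while the exceptional tubes, being positively curved, must close up into finite prisms and antiprisms. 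Executing this case analysis cleanly — DeVos and Mohar's original version is long and partly computer-assisted — is where essentially all of the work lies.
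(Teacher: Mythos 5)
The paper does not prove this statement at all: Theorem \ref{thm:DM} is quoted from DeVos and Mohar \cite{DeVos-Mohar07} purely as background, so there is no internal proof to compare against, and your proposal must be judged as a reconstruction of the cited argument. Your framework is the correct one, and most of the numerology checks out: the identity $\sum_{v} K(v)=\chi(\Omega)$ for \emph{finite} $2$-cell embeddings, the bound $d_v\le 5$, the role of prisms and antiprisms, and the constant $3444=2\cdot 1722$ with $1/1722$ realized by a degree-$3$ vertex on faces of sizes $(3,7,41)$ are exactly the ingredients of the actual proof. One small but consequential slip: your list of vertex types with curvature tending to $0$ is not complete --- a degree-$3$ vertex on faces $(3,6,n)$ also has curvature exactly $1/n$, so the structural analysis has a third degenerating pattern to eliminate, not just the prism and antiprism types.

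Beyond that, there are two genuine gaps. The first you concede yourself: the entire structural core --- proving that a sufficiently large face forces a prism/antiprism tube, that no other unbounded family can propagate, and that consequently every vertex of a non-exceptional graph has curvature at least $1/1722$ --- is stated as a goal rather than proved. Since the theorem \emph{is} precisely this statement plus bookkeeping, the proposal reduces the theorem to itself. The second gap is not flagged in your sketch and would sink even a completed discharging argument: finiteness cannot be extracted from the finite Gauss--Bonnet identity. An infinite locally finite graph admits a $2$-cell embedding only in a non-compact surface, where the identity $\sum_v K(v)=\chi(\Omega)$ as you derived it is unavailable; so from ``all faces bounded, hence $K(v)\ge\varepsilon$ everywhere'' you cannot conclude $|V(G)|\le 2/\varepsilon$, because there is no a priori upper bound on the total curvature of an infinite graph. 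Supplying such a bound --- an analogue of the Descartes--Euler formula for infinite graphs, with correction terms coming from the ends of the surface --- is the main technical contribution of \cite{DeVos-Mohar07} (it is what their title refers to), and without it the finiteness half of the theorem, i.e., Higuchi's conjecture itself, is untouched.
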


The minimum possible constants for $|V(G)|$ in Theorem \ref{thm:DM} for $G$ embedded in a $2$-sphere and a projective plane, respectively, were studied in \cite{RBK05, Nicholson-Sneddon11, Chen-Chen08, Zhang08, Oh17}. In particular, Nicholson and Sneddon~\cite{Nicholson-Sneddon11} gave examples of positively (combinatorially) curved graphs with $208$ vertices embedded into a $2$-sphere. The upper bound on $|V(G)|$ was recently settled by Ghidelli \cite{Ghidelli17}. Planar graphs with nonnegative combinatorial curvature have also been extensively studied (see \cite{Hua-Su2019} and the references within). Tight upper bounds on certain graph classes with positive Lin--Lu--Yau curvature everywhere are also obtained in \cite{GLLY2023+}.

Recently, Lu and Wang \cite{Lu-Wang2023+} initiated the study of the order of planar graphs with positive \textit{Lin--Lu--Yau} curvature (\textit{LLY curvature} for short), which is defined on the vertex pairs of a graph. 
A graph $G$ is called \textit{positively LLY-curved} if it has positive Lin--Lu--Yau curvature on every vertex pair of $G$. 
For brevity of the introduction, we give the definition of the Lin--Lu--Yau curvature in Section \ref{sec:LLY-curvature}. In~\cite{Lu-Wang2023+}, Lu and Wang established an analogue of DeVos and Mohar's result in the context of Lin--Lu--Yau curvature. In particular, they showed the following two theorems. 

\begin{theorem}[\cite{Lu-Wang2023+}]
Let $G$ be a simple positively LLY-curved planar graph $G$ with $\delta(G) \geq 3$. Then the maximum degree $\Delta(G) \leq 17$.
\end{theorem}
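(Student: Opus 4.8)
The plan is to localize the problem at a vertex of maximum degree and exhibit a single incident edge whose curvature is forced to be non-positive once $\Delta(G)\ge 18$. First I would let $x$ be a vertex with $d_x=\Delta(G)$. Since $G$ is planar and $x$ is adjacent to every vertex of $N(x)$, the cone over $G[N(x)]$ is a planar subgraph of $G$, so $G[N(x)]$ is outerplanar and therefore has at most $2\Delta(G)-3$ edges. Each such edge $uv$ is a triangle $xuv$ contributing to both $\#(x,u)$ and $\#(x,v)$, whence $\sum_{z\sim x}\#(x,z)=2\,e(G[N(x)])\le 4\Delta(G)-6$. Averaging over the $\Delta(G)$ edges at $x$ produces a neighbor $y$ with $t:=\#(x,y)\le 3$, and the goal becomes to show $\kappa_{LLY}(x,y)\le 0$.

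Next I would bound $\kappa_{LLY}(x,y)$ from above through the limit-free (Münch--Wojciechowski) formula $\kappa_{LLY}(x,y)=\inf\{\,Lf(y)-Lf(x)\,\}$, the infimum taken over $1$-Lipschitz $f$ with $f(x)-f(y)=1$, where $Lf(v)=\frac{1}{d_v}\sum_{z\sim v}(f(z)-f(v))$ is the normalized Laplacian. Any admissible $f$ yields an upper bound, so I would take the explicit test function with $f(x)=1$, $f(y)=0$, $f\equiv 0$ on the common neighbors of $x$ and $y$, $f\equiv 2$ on the remaining neighbors of $x$, and $f\equiv -1$ on the remaining neighbors of $y$. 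Ignoring for a moment global consistency, this assignment gives $Lf(y)-Lf(x)=\frac{t+2}{d_y}+\frac{2(t+1)}{d_x}-2$. With $t\le 3$, $d_y\ge\delta(G)\ge 3$, and $d_x=\Delta(G)$, this quantity is negative for all large $\Delta(G)$; on its own the estimate would already force a bound well below $17$.

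The difficulty, and the reason the true threshold is $\Delta(G)=17$ rather than something smaller, is that the test function above is \emph{not} globally $1$-Lipschitz in general: whenever a raised neighbor $a$ of $x$ is adjacent to a lowered neighbor $b$ of $y$, the pair $(a,b)$ violates $|f(a)-f(b)|\le 1$. Such a configuration is precisely a $4$-cycle $x\,a\,b\,y$ through the edge $xy$ (with $5$-cycles obstructing analogously via an intermediate vertex), and each one both forbids the extreme assignment and, independently, raises the curvature. Thus the honest upper bound carries a non-negative correction counting the short cycles on $xy$, and the heart of the argument is to bound this correction using planarity: Euler's formula applied to the (outerplanar) local structure around $x$ caps the number of $4$- and $5$-cycles through a single edge, so balancing the guaranteed negative drift against the planarity-limited positive correction shows that $\Delta(G)\ge 18$ still forces $\kappa_{LLY}(x,y)\le 0$, contradicting positive curvature, while $\Delta(G)=17$ survives.

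The main obstacle is exactly this final balancing: extracting the sharp constant requires a careful, likely case-based or discharging, accounting of how many triangles, $4$-cycles, and $5$-cycles through the chosen edge can coexist in a planar neighborhood, together with bookkeeping of the second endpoint's degree $d_y$ (which is only weakly controlled by the averaging step). A matching construction, namely an explicit positively LLY-curved planar graph containing a degree-$17$ vertex, would then certify that the bound is best possible.
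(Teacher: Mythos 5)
First, a point of reference: the paper you were given does not prove this theorem at all --- it is quoted from \cite{Lu-Wang2023+}. What the paper does contain is that reference's key tool, Lemma~\ref{lem:genbound} (and Corollary~\ref{cor:1}), which it combines with forbidden-minor arguments to prove the \emph{outerplanar} analogue, Theorem~\ref{thm:maxdeg}; the planar bound of $17$ is obtained in \cite{Lu-Wang2023+} by the same mechanism. I therefore compare your plan against that machinery.

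Your combinatorial setup is correct: $G[N(x)]$ is outerplanar, so $e(G[N(x)])\le 2\Delta(G)-3$, the codegree sum at $x$ is at most $4\Delta(G)-6$, and averaging yields $y\sim x$ with $t=|N(x,y)|\le 3$; your formula $\frac{t+2}{d_y}+\frac{2(t+1)}{d_x}-2$ for the idealized test function is also right. The genuine gap is that the entire quantitative core --- controlling the non-negative correction caused by the failure of the $1$-Lipschitz condition --- is deferred with only a plausibility claim that planarity ``caps the number of $4$- and $5$-cycles through $xy$.'' That heuristic is not the right invariant, and the step would fail as stated. The correction is not governed by how many short cycles pass through $xy$ but by a covering problem: every far neighbor $a$ of $x$ lying within distance $2$ of a far neighbor $b$ of $y$ forces you to give up either the $+1/d_x$ gain at $a$ or the $+1/d_y$ gain at $b$, and planarity does \emph{not} bound how many vertices are involved. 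Concretely, a single vertex $c$ adjacent to all of $N(x)\setminus\{y\}$ and to all of $N(y)\setminus\{x\}$ is planar (nested $K_{2,n}$-type configurations), and it puts \emph{every} cross pair at distance $2$; which side one should then sacrifice depends on $d_x$, $d_y$ and $t$, the resulting bound changes form (e.g., lowering all far values on the $x$-side gives $\frac{t+2}{d_y}+\frac{t+1}{d_x}-1$, which is positive for $t\ge 2$ no matter how large $\Delta(G)$ is), and in general the optimal repair is a minimum-vertex-cover computation on the conflict structure between $N(x)$ and $N(y)$. Carrying this out in all configurations \emph{is} the theorem, and nothing in your last two paragraphs does it.

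This is exactly where the actual argument takes a different, structural route. Lemma~\ref{lem:genbound} converts positivity of $\kappa(x,y)$ into a coverage inequality --- for $S\subseteq N(x)\setminus\{y\}$, the set $N(S)$ must meet $N(y)$ in more than $\frac{s}{d_x}d_y-(k+1+|N(x,y)|)+|N(S)\cap N(x,y)|$ vertices --- so that the repair/vertex-cover analysis is absorbed once and for all into a single combinatorial conclusion. The contradiction for large $\Delta(G)$ is then extracted not from numerics but from topology: a small set $S$ whose neighborhood covers many vertices of $N(y)$ forces a forbidden minor ($K_{2,3}$ in the outerplanar case, as in the proof of Theorem~\ref{thm:maxdeg}; $K_{3,3}$/Euler-type counting in the planar case), and this is what produces the sharp thresholds $9$ and $17$. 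To turn your proposal into a proof you would need to replace the ``balancing'' paragraph with this coverage-plus-minor argument (or an equivalent of it), not refine the cycle count.
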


\begin{theorem}[\cite{Lu-Wang2023+}]
If $G$ is a simple positively LLY-curved planar graph with minimum degree at least~$3$, then $G$ is finite. In particular, $|V(G)| \leq 17^{544}$.
\end{theorem}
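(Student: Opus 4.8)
The plan is to combine the degree bound $\Delta(G)\le 17$ supplied by the previous theorem with two further ingredients: a \emph{uniform} positive lower bound on the Lin--Lu--Yau curvature, and a discrete Bonnet--Myers diameter estimate. Once the diameter $D=\mathrm{diam}(G)$ is controlled, both finiteness and an explicit order bound follow from a ball-counting argument: fixing any vertex $v$, every vertex of $G$ lies within distance $D$ of $v$, so the number of vertices at distance exactly $i$ is at most $\Delta(\Delta-1)^{i-1}$, giving $|V(G)| \le 1 + \sum_{i=1}^{D}\Delta(\Delta-1)^{i-1} \le \Delta^{D} \le 17^{D}$. Thus the entire problem reduces to showing $D \le 544$.

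First I would argue that positivity of the curvature on every edge forces a uniform positive lower bound once the degrees are bounded. The $\alpha$-Ollivier curvature $\kappa_\alpha(x,y) = 1 - W_1(\mu_x^\alpha,\mu_y^\alpha)$ of an edge $xy$ is computed from an optimal transport problem whose marginals $\mu_x^\alpha,\mu_y^\alpha$ are supported on $\{x\}\cup N(x)$ and $\{y\}\cup N(y)$; since every vertex in these supports lies within distance $3$ of $xy$, all relevant transport costs are graph distances that are determined by a ball of bounded radius around $xy$. With $\delta(G)\ge 3$ and $\Delta(G)\le 17$ there are only finitely many isomorphism types of such bounded-radius balls, hence only finitely many possible values of the LLY curvature $\kappa(x,y)=\lim_{\alpha\to1}\kappa_\alpha(x,y)/(1-\alpha)$; here I would invoke the Lin--Lu--Yau fact that $\alpha\mapsto\kappa_\alpha$ is piecewise linear, so that this limit is an honest one-sided slope and is a rational number whose denominator is controlled by the degrees. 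Consequently the set of attainable \emph{positive} curvature values admits a positive minimum $\kappa_0=\kappa_0(17)>0$, and by hypothesis $\kappa(x,y)\ge\kappa_0$ for every edge of $G$.

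Next I would apply the discrete Bonnet--Myers theorem for Lin--Lu--Yau curvature: if every edge has curvature at least $\kappa_0>0$, then $\mathrm{diam}(G)\le 2/\kappa_0$. Combined with the ball-counting bound this yields $|V(G)|\le 17^{2/\kappa_0}$, and tracking the explicit value of $\kappa_0$ coming from the worst-case denominator forced by $\Delta\le 17$ produces the stated bound $|V(G)|\le 17^{544}$; in particular $G$ is finite. Note that the uniform lower bound of the previous paragraph is precisely what makes Bonnet--Myers applicable, so the two steps are genuinely linked rather than independent.

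The main obstacle is the uniform lower bound, and in particular making rigorous the claim that the curvature is a genuinely local quantity. One has to verify that, although $W_1$ is defined through global graph distances, every pairwise distance entering the transport cost between the two supports is at most $3$, and any path realizing such a distance has length at most $3$ and hence stays within a bounded neighborhood of $xy$; this shows the distance is already determined locally and that global shortcuts cannot alter it, which is what forces the set of possible curvature values to be finite. A secondary, more bookkeeping-heavy obstacle is extracting the precise constant $544$ (equivalently, the exact value of $\kappa_0$) rather than merely some explicit bound, which requires identifying the smallest positive LLY curvature over all admissible local configurations with $\delta(G)\ge 3$ and $\Delta(G)\le 17$.
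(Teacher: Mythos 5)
This theorem is not proved in the present paper at all --- it is quoted verbatim from \cite{Lu-Wang2023+} --- so there is no internal proof to compare against; your outline is, however, essentially the original Lu--Wang argument, and it is correct. The constant itself confirms the route: using the limit-free formulation, $\kappa(x,y)=\Delta f(x)-\Delta f(y)$ for an optimal integer-valued $1$-Lipschitz $f$, so $\kappa(x,y)$ is an integer multiple of $1/\mathrm{lcm}(d_x,d_y)$; with $\Delta(G)\le 17$ we have $\mathrm{lcm}(d_x,d_y)\le \mathrm{lcm}(16,17)=272$, so positivity forces $\kappa(x,y)\ge 1/272$ on every edge, the Lin--Lu--Yau Bonnet--Myers theorem (together with the fact, quoted in Section 2 of this paper, that an edge-wise lower bound extends to all vertex pairs) gives $\mathrm{diam}(G)\le 2\cdot 272=544$, and your ball-counting estimate $1+\sum_{i=1}^{544}17\cdot 16^{i-1}\le 17^{544}$ finishes. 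Two refinements to what you wrote: first, your compactness argument (finitely many isomorphism types of bounded-radius balls) only yields the \emph{existence} of some $\kappa_0>0$ and can never produce the explicit exponent $544$; the rationality remark you make in passing is the load-bearing step and must be sharpened to the lcm bound above, not merely ``denominator controlled by the degrees'' (the naive denominator $d_xd_y$ would give $1/289$ and hence the weaker exponent $578$). Second, the locality discussion you flag as the main obstacle is genuinely needed only for the compactness version; once you argue via the algebraic form of $\nabla_{xy}\Delta f$, it becomes unnecessary, which is one reason the lcm route is the cleaner one.
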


The question of determining a sharp upper bound on the order of a positively LLY-curved planar graph is still open and seems hard. 
In this paper, we obtain sharp upper bounds on the maximum degree and order of a positively LLY-curved (maximal) outerplanar graph. A graph is \textit{outerplanar} if it admits a planar embedding such that all vertices lie on the outer face. A \emph{maximal outerplanar} graph is an edge maximal outerplanar graph. 
We first show a sharp upper bound on the maximum degree of a positively LLY-curved outerplanar graph.

\begin{theorem} \label{thm:maxdeg}
    Let $ G $ be a simple positively LLY-curved outerplanar graph with $ \delta(G) \geq 2 $. Then $ \Delta(G) \leq 9$ and the upper bound is sharp.
\end{theorem}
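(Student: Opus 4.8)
The plan is to argue locally: assuming a vertex $v$ of degree $d := d_v \ge 10$, I will exhibit an edge incident to $v$ whose LLY curvature is non-positive, contradicting positive curvature. The first step is a structural lemma about neighborhoods. In any simple outerplanar graph, the subgraph induced on $N(v)$ is a \emph{linear forest} (a disjoint union of paths): a cycle inside $N(v)$ together with the hub $v$ would contain a wheel, hence a $K_4$-minor, and a vertex of degree $\ge 3$ inside $N(v)$ together with $v$ would yield a $K_{2,3}$ — both forbidden in outerplanar graphs. Two consequences are immediate: every $u \in N(v)$ has at most $2$ neighbors inside $N(v)$, so $\abs{N(v) \cap N(u)} \le 2$ for each edge $vu$; and since a forest on $d \ge 1$ vertices has a vertex of degree $\le 1$, there is a neighbor $u$ of $v$ with $\abs{N(v) \cap N(u)} \le 1$. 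I will analyze the edge $vu$ for such a $u$.

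Next I carry out the curvature computation using the definition from Section~\ref{sec:LLY-curvature}, writing $\kappa(v,u) = \lim_{\alpha \to 1} \kappa_\alpha(v,u)/(1-\alpha)$ with $\kappa_\alpha = 1 - W_1(\mu_v^\alpha, \mu_u^\alpha)$, and setting $\beta := 1-\alpha$ small. I take $u$ to be a path-endpoint with $d_u = 2$ and unique common neighbor $z$ (the cases $d_u \ge 3$ and $\abs{N(v)\cap N(u)} = 0$ are handled by the monotonicity remark below). Tallying surplus and deficit of $\mu_u^\alpha - \mu_v^\alpha$, the mass to be moved sits (up to $O(\beta)$) at $u$ and $z$, while the deficits lie at $v$ and at the $d-2$ remaining neighbors $w_1,\dots,w_{d-2}$ of $v$. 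The crux is a forced transport lower bound: each $w_j$ is at distance exactly $2$ from $u$, and \emph{at most one} of them can be at distance $1$ from $z$, precisely because $z$ lies in the linear forest $N(v)$ and so has at most one neighbor in $N(v)$ besides $u$. Hence at least mass $(d-3)\beta/d$ must travel distance $2$, forcing
\[
W_1(\mu_v^\alpha,\mu_u^\alpha) = \alpha - \tfrac{\beta}{2} + \tfrac{(2d-5)\beta}{d},
\qquad\text{so}\qquad
\kappa(v,u) = \frac{5}{d} - \frac12 .
\]
This is non-positive exactly when $d \ge 10$, which yields $\Delta(G) \le 9$.

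The main obstacle, and the part requiring genuine care, is establishing that the configuration just analyzed is the \emph{best} case, i.e.\ that no locally permissible outerplanar structure raises the curvature of the worst incident edge above $0$ when $d = 10$. Concretely, I must verify the transport plan above is optimal (via Kantorovich duality) and that its surplus/deficit pattern is stable as $\alpha \to 1$, and I must prove a monotonicity statement: increasing $d_u$ or decreasing $\abs{N(v)\cap N(u)}$ only lowers $\kappa(v,u)$. The reason is that extra neighbors of $u$ must, by outerplanarity, lie outside $N[v]$ and therefore contribute mass that is strictly farther (distance $2$ or $3$) from the deficit region; repeating the transport count gives, for instance, $\kappa(v,u) = 5/d - 1$ when $d_u = 3$ and $\kappa(v,u) = 2/d - 1$ in the triangle-free case, both well below $5/d - 1/2$. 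Thus in every case $\kappa(v,u) \le 5/d - 1/2 \le 0$ for $d \ge 10$.

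Finally I address sharpness. The fan $F_9$, consisting of a hub $v$ joined to every vertex of a path $u_1 u_2 \cdots u_9$, is a maximal outerplanar graph with $\delta(F_9) = 2$ and $\Delta(F_9) = 9$; the computation above gives $\kappa(v,u_1) = 5/9 - 1/2 = 1/18 > 0$ for the endpoint edges, and a routine check of the remaining (higher-triangle) edges shows all pairs have positive LLY curvature. Hence the bound $\Delta \le 9$ is attained, completing the proof.
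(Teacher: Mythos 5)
Your strategy is genuinely different from the paper's: the paper invokes a lemma of Lu and Wang (Corollary \ref{cor:1}) that converts positive curvature into a neighborhood inequality and then derives $K_{2,3}$-minors, whereas you compute the curvature of a single well-chosen edge directly, choosing $u$ to be an endpoint of the linear forest induced on $N(v)$. Your computation in the clean configuration ($d_u = 2$, unique common neighbor $z$, one further neighbor of $z$ inside $N(v)$) is correct and agrees with the paper's Table \ref{tab:exterior}, which gives $\kappa = 3/d_x + 5/d_y - 2 = 5/d_y - 1/2$ when $(d_x,d_y) = (2,d_y)$ with $d_y \ge 2d_x$. However, there is a genuine gap in the step you yourself flag as the main obstacle: the ``monotonicity'' reduction of all other cases to this one. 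Your justification --- that extra neighbors of $u$ lie outside $N[v]$ and hence are ``strictly farther (distance $2$ or $3$) from the deficit region'' --- is false. Outerplanarity does not prevent a vertex $t \in N(u)\setminus N[v]$ from being adjacent to a vertex $w \in N(v)\setminus\{u,z\}$: the graph on $\{v,u,z,w,t\}$ with edges $vu, vz, vw, uz, ut, tw$ is outerplanar (a $5$-cycle $z\,u\,t\,w\,v$ with the single chord $uv$). In that situation the mass at $t$ can be moved to $w$ at distance $1$, your forced-transport count breaks, and dually you cannot set $f(t)=2$ and $f(w)=-1$ simultaneously. Concretely, your quoted value for $d_u=3$ is wrong: in the configuration just described the curvature is $6/d - 1$, not $5/d - 1$ (and in other $d_u=3$ configurations it is $4/d-1$); your $2/d-1$ for the triangle-free case similarly fails when $uv$ lies on a $4$-cycle, where the value becomes $4/d-1$.

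The conclusion you want (every such edge has $\kappa \le 5/d - 1/2$, hence $\le 0$ for $d \ge 10$) does appear to survive, but proving it requires work the proposal does not contain, and the most obvious patch fails quantitatively: if you handle a crossing edge $t^{*}w^{*}$ by setting $f(t^{*})=0$, $f(w^{*})=-1$, the bound becomes $4/d + 5/d_u - 2$, which at $d=10$, $d_u=3$ equals $+1/15 > 0$. A correct repair must first bound the cross-connections between $N(u)\setminus N[v]$ and $N(v)\setminus\{u,z\}$ (at most one edge or short path, by $K_{2,3}$-minor arguments of exactly the kind the paper uses), and then re-optimize the Lipschitz function through the resulting cascade of constraints (raising $f(w^{*})$ forces raising $f$ on the neighbors of $w^{*}$ inside $N(v)$, and distance-$2$ crossings must be handled as well as edges). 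This bookkeeping is precisely the content that the paper's route packages into Lu--Wang's lemma, after which only clean minor-freeness arguments remain. As written, your proof establishes the theorem only under the unjustified assumption that the $d_u=2$ configuration is the worst case.
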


Using Theorem \ref{thm:maxdeg} and additional degree constraints shown in Section \ref{sec:degree_constraints},
we obtain a sharp upper bound on the order of positively LLY-curved maximal outerplanar graphs.

\begin{theorem}\label{thm:main}
    Let $G$ be a positively LLY-curved maximal outerplanar graph. Then $|V(G)| \leq 10$ and the upper bound is sharp.
\end{theorem}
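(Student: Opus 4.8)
The plan is to establish the bound by a structural analysis of the triangulation underlying $G$, feeding on Theorem~\ref{thm:maxdeg} and the degree constraints of Section~\ref{sec:degree_constraints}, and then to certify sharpness with an explicit $10$-vertex example. First I would record the standard description of a maximal outerplanar graph on $n := |V(G)| \ge 4$ vertices (the cases $n \le 3$ being trivial): it is a triangulation of an $n$-gon, its outer boundary is a Hamiltonian cycle, it has $2n-3$ edges and $n-2$ triangular inner faces, and its weak dual $T$ is a tree on $n-2$ nodes of maximum degree $3$. In this model every edge lies in exactly one triangle (if it is a boundary edge) or exactly two (if it is a chord), so the endpoints of any edge have either one or two common neighbors. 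Since the leaves of $T$ are ears, $G$ has at least two vertices of degree $2$, hence $\delta(G)=2$ and Theorem~\ref{thm:maxdeg} yields $\Delta(G) \le 9$.

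The crux is to turn positive curvature into enough local information to cap the number $n-2$ of triangles at $8$. I would obtain the necessary restrictions from Section~\ref{sec:degree_constraints}; conceptually each is an upper bound on $\kappa(u,v)$ produced by Kantorovich duality, i.e.\ by exhibiting a $1$-Lipschitz potential whose integral against $\mu_u^\alpha - \mu_v^\alpha$ lower-bounds the transport distance. Because each edge is in only one or two triangles, these bounds reduce to inequalities in $d_u$, $d_v$ and the one or two shared neighbors, so that positivity of the curvature forbids specific degree configurations. Two phenomena drive the argument. First, a long ``strip'' of bounded-degree triangles is impossible: a boundary edge joining two interior degree-$4$ vertices of such a strip shares a single neighbor while its remaining neighbors are spread out, and a direct optimal-transport computation gives curvature exactly $0$ there, so strict positivity rules it out. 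Second, a large degree at a vertex must be compensated by shortcuts among its neighbors, which in a triangulation can only be supplied by a dominating fan structure; this, together with $\Delta(G)\le 9$, limits how big any ``hub'' can be.

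With these inputs the remaining work is combinatorial. The strip obstruction bounds the length of every path in the dual tree $T$, and the degree constraints bound how the branches of $T$ may be decorated, while $\Delta(G)\le 9$ caps the size of any fan. Organizing the count around a vertex of maximum degree and tracking how many triangles its incident structure can carry before a forbidden (nonpositively curved) edge is forced, I expect to arrive at $n-2 \le 8$, that is $n \le 10$. For sharpness I would take the fan consisting of an apex $u$ adjacent to every vertex of a path $v_1 v_2 \cdots v_9$: this graph is maximal outerplanar on $10$ vertices with $17$ edges and $\Delta = 9$, and checking its three edge types (apex-to-endpoint, apex-to-interior, and boundary) by an explicit optimal-transport computation shows $\kappa>0$ on each, so the bound is attained.

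I expect the main obstacle to be pinning the constant to exactly $10$ rather than merely to some finite number. The degree bound $\Delta(G)\le 9$ is not by itself terminating, since a strip-type triangulation can have all degrees equal to $4$; the real difficulty is the curvature bookkeeping that separates the genuinely positively curved ``compact'' graphs from the spread-out ones whose critical edges have curvature $0$ or slightly negative. Converting the many edge-local inequalities of Section~\ref{sec:degree_constraints} into a single global statement about the dual tree, and checking that no exotic branching of $T$ slips past the bound of eight triangles, is where I anticipate the argument to be most delicate and case-heavy.
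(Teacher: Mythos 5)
Your setup is sound: the triangulation structure of maximal outerplanar graphs, $\delta(G)=2$ and hence $\Delta(G)\le 9$ via Theorem~\ref{thm:maxdeg}, the observation that an exterior edge between two degree-$4$ vertices has curvature exactly $0$ (this matches the configuration $\delta_{xz}=\delta_{yz}=1$ in the paper's exterior-edge table, $3/4+5/4-2=0$), and the sharpness example (the $10$-vertex fan) is the same one the paper uses. But the proof has a genuine gap at its center: the step that actually yields $n-2\le 8$ is never performed. You write that you ``expect to arrive at $n-2\le 8$'' by ``organizing the count around a vertex of maximum degree,'' and you yourself flag this bookkeeping as the delicate, case-heavy part --- but that bookkeeping \emph{is} the theorem. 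Nothing in the proposal controls how strips, fans, and their junctions can be assembled in the dual tree: your strip obstruction only kills boundary edges joining two degree-$4$ vertices, so it says nothing about strips whose boundary degrees mix $3$'s and $4$'s near the ends, about the edges where a fan meets a strip, or about branch nodes of the dual tree. Moreover, your claim that the strip obstruction ``bounds the length of every path in the dual tree $T$'' is false as stated: the $10$-vertex fan is itself a dual path of eight triangles and is positively curved everywhere, so dual paths are only bounded after combining the strip and hub constraints --- which is exactly the analysis you have deferred.

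For comparison, the paper does not attempt this global dual-tree count at all. It argues by induction on $n$: delete a degree-$2$ vertex $w$, apply the induction hypothesis to find a non-positively curved edge $xy$ in $G-w$, and then use the good/bad degree-pair tables of Lemma~\ref{lem:exterior} and Lemma~\ref{lem:interior} to show that $(d_x,d_y)$ remains a bad pair in $G$, with the two surviving local cases $(d_x,d_y)\in\{(4,6),(5,5)\}$ eliminated by examining $H_{xy}$ and re-running the argument on a second degree-$2$ vertex $x'$. Crucially, the induction is anchored by a SageMath verification that all $228$ maximal outerplanar graphs on $11$ vertices fail to be positively curved. The fact that the published proof offloads the $n=11$ case to a computer check is strong evidence that the ``delicate and case-heavy'' step you postponed is not routine; until you either carry out that global count explicitly or supply an equivalent base case plus an induction mechanism, what you have is a plan, not a proof.
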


The upper bounds in Theorem \ref{thm:maxdeg} and Theorem \ref{thm:main}  are both achieved by the fan graph on $10$ vertices (see Figure \ref{fig:fan}).

\begin{figure}[th]
    \centering
    \vspace{0.15cm}
    \includegraphics[scale=.3]{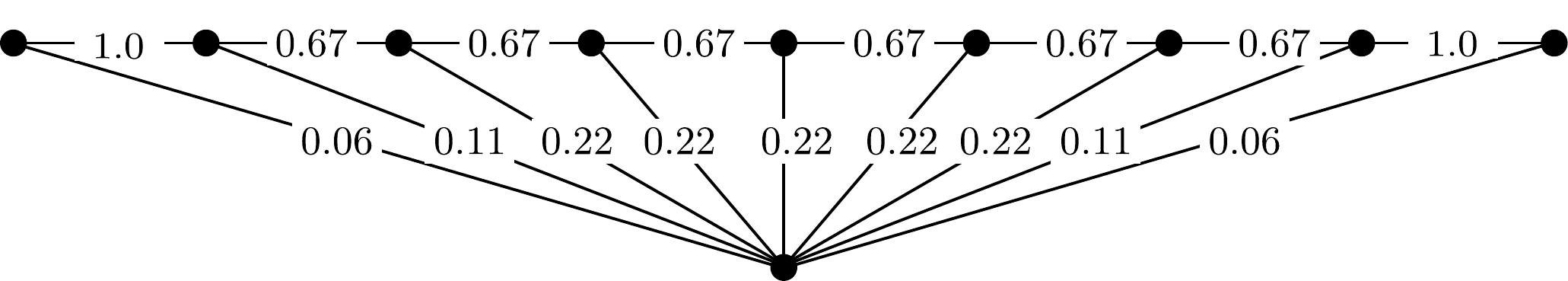}
    \vspace{0.15cm}
    \caption{Fan graph on 10 vertices with curvature of each edge computed by SageMath.}
    \label{fig:fan}
\end{figure}

For convenience, in the rest of the paper, we simply say a graph $G$ is positively curved if it is positively LLY-curved, and we simply denote $\kappa_{LLY}(x,y)$ by $\kappa(x,y)$.

\medskip

{\bf Notation and Terminology.} Given a graph $G$ and $x\in V(G)$, we use $N(x)$ to denote the neighborhood of $x$, i.e.,~$N(x) = \{y \in V(G) : xy \in E(G)\}$. Let $d_x$ denote the degree of $x$, i.e., $d_x = |N(x)|$. Let $N[x]:= N(x)\cup \{x\}$. Given $x,y\in V(G)$, let $N(x,y)$ denote the common neighborhood of $x$ and $y$, i.e., $N(x,y):=N(x)\cap N(y)$. Moreover, given $S\subseteq V(G)$, define $N(S) := \{x \in V(G) : xs \in E(G) \text{ for some } s \in S \}$ and $E(x,S) := \{ xs \in E(G): s \in S\}$.  Throughout the paper, for $xy \in E(G)$, we will consider the \emph{local configuration} $H_{xy} \subseteq G$, the induced subgraph of $G$ defined by $H_{xy} = G[N[x]\cup N[y]].$ 
For $x,y \in V(G)$, we use $d(x,y)$ to denote the number of edges in a shortest path between $x$ and $y$. Given an outerplanar graph $G$ and an edge $e$, we call $e$ an \emph{exterior edge} if it lies on the boundary of the outer face of $G$, and we call $e$ an \emph{interior edge} otherwise.

\section{Lin--Lu--Yau Ricci curvature}\label{sec:LLY-curvature}

In this section, we define the Lin--Lu--Yau Ricci curvature. We closely follow the notation of \cite{Lu-Wang2023+, Ollivier}. Let $m_1$ and $m_2$ be two probability distributions on $V(G)$. That is, for $i=1,2$, we have $m_i: V(G) \to [0,1]$ such that $\sum_{x \in V(G)} m_i(x) = 1$. A \emph{coupling} between $m_1$ and $m_2$, is a map $A: V(G) \times V(G) \to [0,1]$ with finite support such that $\sum_{y \in V(G)} A(x,y) = m_1(x)$ and $\sum_{x \in V(G)} A(x,y) = m_2(y)$.
The \emph{transportation distance} between $m_1$ and $m_2$ is defined as
\begin{equation}
 W(m_1, m_2) = \inf_A \sum_{x,y \in V(G)} A(x,y)\,d(x,y),
\end{equation}
where the infimum is taken over all couplings $A$ between $m_1$ and $m_2$. By the duality theorem of a linear optimization problem, the transportation distance can also be written as
\begin{equation}
W(m_1, m_2) = \sup_{f\in Lip(1)} \dss_{x\in V(G)} f(x) \lp m_1(x)-m_2(x)\rp,
\end{equation}
where the supremum is taken over all $1$-Lipschitz functions $f$.
A random walk $m$ on a graph $G=(V,E)$ is defined as a family of probability measures $\{m_v(\cdot)\}_{v\in V(G)}$ such that $m_v(u) = 0$ for all $uv \notin E(G)$. It follows that  $m_v(u) \geq 0$ for all $v,u\in V(G)$ and $\sum_{u\in N[v]} m_v(u) = 1$. 
 For $v \in V(G)$, consider the~$\alpha$-lazy random walk $\{m_v^{\alpha}\}_{v\in V(G)}$ defined as
\begin{align*}
 m^\alpha_v(u) = \begin{cases}
     \alpha &\text{\qquad  if   } u=v,\\
     \frac{1-\alpha}{d_v} &\text{\qquad  if   } u \in N(v), \\
     0 &\text{\qquad  otherwise}.
 \end{cases}
\end{align*}
In \cite{LLY}, Lin, Lu and Yau defined the Ricci curvature of graphs based on the $\alpha$-lazy random walk as $\alpha$ goes to $1$. More precisely,
for any $x\neq y \in V$, they defined the $\alpha$-Ricci-curvature $\kappa_{\alpha}(x,y)$ to be 
\begin{equation}
\kappa_{\alpha}(x,y) = 1 - \frac{W(m_x^{\alpha}, m_y^{\alpha})}{d(x,y)}
\end{equation} and the Lin--Lu--Yau Ricci curvature $\kappa_{\textrm{LLY}}$ of the vertex pair $(x,y)$ in $G$ to be 
\begin{equation}
\kappa_{\textrm{LLY}}(x,y) = \ds\lim_{\alpha \to 1} \frac{\kappa_{\alpha}(x,y)}{(1-\alpha)}.
\end{equation}
Recall that a graph $G$ is called positively LLY-curved if it has positive Lin--Lu--Yau curvature on every vertex pair of $G$. It was shown \cite{LLY} that if $\kappa_{\textrm{LLY}}(x,y)\geq \kappa_0$ for every edge $xy\in E(G)$, then $\kappa_{\textrm{LLY}}(x,y)\geq \kappa_0$ for any pair of vertices $(x,y)$. 

Recently, M\"{u}nch and Wojciechowski \cite{MW} gave a limit-free formulation of the Lin--Lu--Yau Ricci curvature using \textit{graph Laplacian}. Given a graph $G = (V,E)$, the graph Laplacian $\Delta$ is defined as
\begin{equation}
\Delta f(v)=\frac{1}{d_v}\sum\limits_{u\in N(x)} (f(u)-f(v)). 
\end{equation}
The Lin--Lu--Yau curvature can be alternatively expressed \cite{MW} as
\begin{equation}\label{eq:LLY-curvature}
    \kappa_{LLY} (x,y) = \inf_{\substack{f \in Lip(1),\\\nabla_{yx}f = 1}} \nabla_{xy} \Delta f,
\end{equation}
where $\nabla_{vu} f = \frac{f(v) - f(u)}{d(v,u)}$
is the \emph{gradient} function.
Following Lemma 2.2 of \cite{BM2015}, it suffices to optimize over all integer-valued $1$-Lipschitz functions $f$. An integer-valued $1$-Lipschitz function $f: V(G)\to \mathbb{Z}$ that attains the infimum in Equation \eqref{eq:LLY-curvature} is called 
\emph{optimal for $(x,y)$}.

\section{Proof of Theorem \ref{thm:maxdeg}.}
In this section, we show a sharp upper bound on the maximum degree of a positively curved outerplanar graph. We first recall the following lemma of Lu and Wang \cite{Lu-Wang2023+}.

\begin{lemma}[\cite{Lu-Wang2023+}]\label{lem:genbound}
Let $G$ be a simple positively curved planar graph and $xy \in E(G)$ be an arbitrary edge with $d_x \leq d_y$. Suppose $S \subseteq N(x) \setminus \{y\}$, $|S| = s$, and $|S \cap N(y)| = k$. Then
\begin{align*}
    |N(S) \cap N(y)| > \frac{s}{d_x}d_y - (k+1 + |N(x,y)|) + |N(S) \cap N(x,y)|.
\end{align*}
\end{lemma}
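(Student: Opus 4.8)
The plan is to combine the limit-free formula \eqref{eq:LLY-curvature} of M\"unch and Wojciechowski with the positivity hypothesis. Since $xy\in E(G)$ we have $d(x,y)=1$, so every integer-valued $f\in Lip(1)$ with $\nabla_{yx}f = f(y)-f(x)=1$ is an admissible competitor in the infimum defining $\kappa(x,y)$. Because $G$ is positively curved, $\kappa(x,y)>0$, and therefore
\[
\nabla_{xy}\Delta f \;=\; \frac{1}{d_x}\sum_{u\in N(x)}\bigl(f(u)-f(x)\bigr)\;-\;\frac{1}{d_y}\sum_{u\in N(y)}\bigl(f(u)-f(y)\bigr)\;>\;0
\]
for \emph{every} such $f$. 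First I would fix one function $f$ adapted to $S$, evaluate the two Laplacian sums, and read the inequality off; the strict sign ``$>$'' in the statement is exactly the strictness furnished by $\kappa(x,y)>0$.

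To choose $f$, normalize $f(x)=0$ and $f(y)=1$, then push the values on $N(x)$ as low as the $1$-Lipschitz condition permits and those on $N(y)$ as high as it permits, so as to minimize the competitor $\nabla_{xy}\Delta f$. I would set $f\equiv-1$ on the vertices of $S$ that can reach $-1$. A vertex $s\in S\cap N(y)$ cannot, since $|f(s)-f(y)|\le 1$ forces $f(s)\ge 0$; thus only the $s-k$ vertices of $S\setminus N(y)$ attain $-1$, while the $k=|S\cap N(y)|=|S\cap N(x,y)|$ others are capped at $0$---this is precisely the origin of the $k$ in the bound. On $N(y)$ the values are pushed toward their maximum, and the neighbours of $y$ that fail to attain it are exactly $x$ (pinned at $0$), the common neighbours $N(x,y)$ (capped through their adjacency to $x$), and the vertices of $N(S)\cap N(y)$ (capped through their adjacency to the $(-1)$-valued vertices of $S$). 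Substituting the two sums into the displayed inequality, clearing denominators by multiplying through by $d_xd_y$, and solving for $|N(S)\cap N(y)|$ should produce the claimed estimate; the extra term $+|N(S)\cap N(x,y)|$ is the inclusion--exclusion correction for vertices counted once as common neighbours and once in $N(S)\cap N(y)$.

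The hard part will be the construction and bookkeeping for this single function rather than any conceptual step. I must exhibit an honest, globally $1$-Lipschitz integer function realizing the extremal values above: assigning $-1$ on $S\setminus N(y)$ while assigning near-maximal values on $N(y)$ can conflict across edges of $G$, so I have to verify feasibility and, crucially, that the deficiency of $\sum_{u\in N(y)}f(u)$ from its maximum is accounted for by \emph{exactly} the three quantities $1$, $|N(x,y)|$, and $|N(S)\cap N(y)|$, with no additional untracked loss (for instance from neighbours of $x$ outside $S$ that would otherwise block nearby vertices of $N(y)$) and with each overlap counted with the correct multiplicity. It is this tight accounting that forces the precise combination $-(k+1+|N(x,y)|)+|N(S)\cap N(x,y)|$ in place of a cruder expression; once the function is in hand the argument is local to $H_{xy}$ and purely combinatorial, and I do not expect planarity to be needed for this particular estimate.
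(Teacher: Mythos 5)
Your setup is sound, and it matches the method behind this lemma: since $\kappa(x,y)$ is an \emph{infimum} over admissible functions, positivity really does give $\nabla_{xy}\Delta f>0$ for every integer-valued $1$-Lipschitz $f$ with $f(y)-f(x)=1$, and the strict inequality in the statement is exactly this strictness. (Note, for calibration, that the paper you are reading never proves this lemma; it quotes it from \cite{Lu-Wang2023+}, so the only meaningful comparison is with that kind of test-function argument, which is indeed what you propose.) The problem is that everything after the setup --- what you yourself call ``the hard part'' --- \emph{is} the lemma, and the accounting you sketch cannot be realized by any $1$-Lipschitz function. To generate the term $\frac{s}{d_x}d_y$ you need $f\equiv-1$ on $S\setminus N(y)$; to generate the correction $-(k+1+|N(x,y)|)+|N(S)\cap N(x,y)|$ with these coefficients you need each vertex of $N(S)\cap N(y)$ outside $N(x,y)\cup\{x\}$ to lose exactly $1$ from the maximal value $2$, i.e.\ to sit at $f=1$. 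These two demands are incompatible: a vertex of $N(S)\cap N(y)$ adjacent to a $(-1)$-valued vertex of $S$ is forced down to $f\le 0$, a deficiency of $2$, and if you carry that through the computation you obtain an inequality in which $|N(S)\cap N(y)|$ appears (up to lower-order corrections) with a factor $2$, which is strictly weaker than the stated bound. The alternative of raising $S\setminus N(y)$ to $0$ to free up those vertices destroys the $\frac{s}{d_x}d_y$ term instead; neither naive choice, nor the obvious interpolations, reproduces the exact inequality.

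The second concrete failure is the one you flag but do not resolve: the vertices of $N(x)\setminus(S\cup\{y\})$. Whatever value you assign them, either they block vertices of $N(y)\setminus(N(S)\cup N(x,y)\cup\{x\})$ from reaching $2$ --- producing loss terms indexed by $N\bigl(N(x)\setminus S\bigr)\cap N(y)$, a set that does not appear anywhere in the statement --- or you must place them at $f=1$, which is impossible for those adjacent to the $(-1)$-valued part of $S$. Since the lemma is stated for a general subset $S\subseteq N(x)\setminus\{y\}$, these vertices genuinely exist and their interactions must be controlled, not deferred. This is also why your closing assertion that ``planarity should not be needed'' is risky rather than simplifying: the statement is made (and cited) for planar graphs, and forbidding exactly the adjacencies above (edges from $N(x)\setminus S$ or from $S$ into the far part of $N(y)$) is the kind of thing planarity is typically invoked for in \cite{Lu-Wang2023+}. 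As it stands, your proposal is a correct reduction to a construction problem together with an accurate list of reasons the construction is delicate; it does not contain the construction, and the partial bookkeeping it does commit to leads to a weaker inequality than the one claimed.
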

    
     When $S = N(x) \setminus \{y\}$ in the equation above, we have $s=d_x -1$ and $k = |N(x,y)|$. We then obtain the following immediate corollary of \lemref{genbound}.
     
    \begin{corollary}\label{cor:1}
		Let $G$ be a simple positively curved planar graph and $xy\in E(G)$ be an arbitrary edge with $d_x \leq d_y$. Let $ S = N(x) \setminus \{y\} $. Then 
		\[
		 \ab{N(S) \cap N(y)} > \pa{1 - \frac{1}{d_x}}d_y - (2\ab{N(x,y)}+1) + \ab{N(S)\cap N(x,y)}.
		\]
	\end{corollary}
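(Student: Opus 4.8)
The plan is to obtain this statement as a direct specialization of Lemma~\ref{lem:genbound}, taking $S = N(x)\setminus\{y\}$. The only real content is to evaluate the two parameters $s = |S|$ and $k = |S\cap N(y)|$ for this particular choice of $S$, after which the claim follows by substitution and simplification.

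First I would record that $s = d_x - 1$: since $xy \in E(G)$ we have $y \in N(x)$, so deleting $y$ from $N(x)$ decreases the size by exactly one. Next I would argue that $k = |N(x,y)|$. Here the one point that deserves a moment of care is that $G$ is simple and therefore loopless, so $y \notin N(y)$. Consequently
$$S \cap N(y) = \bigl(N(x)\setminus\{y\}\bigr)\cap N(y) = \bigl(N(x)\cap N(y)\bigr)\setminus\{y\} = N(x)\cap N(y) = N(x,y),$$
where the removal of $y$ is vacuous precisely because $y\notin N(y)$. Hence $k = |S\cap N(y)| = |N(x,y)|$.

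Finally I would substitute $s = d_x - 1$ and $k = |N(x,y)|$ into the conclusion of Lemma~\ref{lem:genbound}. The leading term becomes $\frac{d_x - 1}{d_x}\,d_y = \bigl(1 - \tfrac{1}{d_x}\bigr)d_y$, the subtracted term becomes $k + 1 + |N(x,y)| = 2|N(x,y)| + 1$, and the term $|N(S)\cap N(x,y)|$ is carried over unchanged. This produces exactly the claimed inequality.

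Since the statement is flagged as an immediate corollary, I do not expect any genuine obstacle: the whole argument is the bookkeeping above. The only step worth isolating is the computation of $k$, where the simplicity hypothesis (ensuring $y\notin N(y)$) is what collapses $S\cap N(y)$ down to the common neighborhood $N(x,y)$ rather than a proper subset of it.
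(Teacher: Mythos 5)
Your proof is correct and matches the paper's own derivation exactly: the paper also obtains the corollary by setting $S = N(x)\setminus\{y\}$ in Lemma~\ref{lem:genbound} and noting $s = d_x - 1$ and $k = |N(x,y)|$. Your extra remark that simplicity ensures $y \notin N(y)$, so that $S \cap N(y) = N(x,y)$, is a small but valid piece of bookkeeping the paper leaves implicit.
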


	\begin{proof}[Proof of Theorem \ref{thm:maxdeg}]
		Suppose, towards a contradiction, that $ \Delta(G) \geq 10$.
  Let $ x,y \in V(G) $ such that~$d_y = \Delta(G) $ and~$xy \in E(G) $ is an exterior edge. 
  Then $d_x \ge \delta(G) \geq 2$. 
  Let $S = N(x) \setminus \{y\}$ and let $ T\deq N(S) \cap N(y)$. Since $xy$ is an exterior edge, the common neighborhood $N(x,y)$ may either be empty, or contain one vertex. We consider these cases separately.\vspace{5pt}\\
		\textit{Case $1$:}  
		Suppose $ \ab{N(x,y)} = 0 $. Using \corref{1},
		\begin{align*}
			\ab{T} &> \pa{1 - \frac{1}{d_x}}d_y - 1 + \ab{N(S)\cap N(x,y)}\\
			&\geq \frac{1}{2}\cdot 10 - 1 + \ab{N(S) \cap N(x,y)}\\
			&\geq 4.
		\end{align*}
		  Hence $ \ab{T} \geq 5 $. However, note that contracting all edges in $ E(x,S) $ yields a $ K_{2, 3}$ minor in $G$, contradicting the outerplanarity of $G$.\vspace{5pt}\\
		\textit{Case $2$:}
		Suppose $ \ab{N(x,y)} = 1 $ and let $z$ be the only common neighbor in $N(x,y)$. If $ d_x = 2 $, then $ |S| = |N(x) \setminus \{y\}|=1 $ and
		\begin{align*}
			\ab{T} &> \pa{1 - \frac{1}{d_x}}d_y - 3 + \ab{N(S)\cap N(x,y)}\\
			&\geq \frac{1}{2}\cdot 10 - 3 + \ab{N(S) \cap N(x,y)}\\
			&\geq 2.
		\end{align*}
		  Hence, $ \ab{T} \geq 3 $. Therefore, the only vertex in $ S $ (which is $z$) is adjacent to at least $3$ neighbors of $ y $, yielding a $ K_{2,3} $ minor and contradicting the outerplanarity of $G$. If $ d_x \geq 3$, then
		\begin{align*}
			\ab{T} &> \pa{1 - \frac{1}{d_x}}d_y - 3 + \ab{N(S)\cap N(x,y)}\\
			&\geq \frac{2}{3}\cdot 10 - 3 + \ab{N(S) \cap N(x,y)}\\
			&\geq \frac{11}{3}.
		\end{align*}
  Hence, $ \ab{T} \geq 4 $.
  Since $d_y = \Delta(G) \ge 10$ and  $d_x \geq 3 $, 
  the sets $ N(y)\setminus \{x,z\}$ and $ S \setminus \{z\} $ must be nonempty. 
  For any $ v \in N(y)\setminus\{x, z\}$ and $ w \in S \setminus \{z\}$, 
  if $ vw \in E(G) $, then either $z$ lies in the interior of the cycle $wxyv$ which contradicts that $G$ is outerplanar, or $xy$ lies in the interior of the cycle $wxyv$ which contradicts that $xy$ is an exterior edge.
    Thus, we have that for any $ v \in N(y)\setminus \{x,z\}$ and $ w \in S \setminus \{z\}$, $vw \notin E(G)$.

   Therefore, any $v\in T\setminus\{x,z\}$ must be adjacent to $z$. Since $|T\setminus \{x,z\}|\geq 2$ and $x\in N(z),$ $z$ is adjacent to at least three vertices in~$N(y)$, yielding a $ K_{2,3} $ minor together with $y$ and $N(z,y)$, contradicting the outerplanarity of $G$.
\end{proof}

\section{Exact Curvature via Local Configuration}\label{sec:degree_constraints}
In this section, we characterize the possible degree pairs of an edge with positive Lin--Lu--Yau curvature in a maximal outerplanar graph by determining the optimal $1$-Lipschitz functions for every edge $xy\in E(G)$. Recall that  
\begin{equation}
    \kappa(x,y) = \inf_{\substack{f \in Lip(1),\\\nabla_{yx}f = 1}} \nabla_{xy} \Delta f, \;\;\;\textrm{where $\nabla_{xy} f = \frac{f(x) - f(y)}{d(x,y)}$.}
\end{equation}
Moreover, recall that by Lemma 2.2 of \cite{BM2015}, it suffices to consider integer-valued optimal functions $f_0$. Thus, since $\nabla_{yx}f_0 =1$, we may assume without loss of generality that $f_0(x)=0$ and $f_0(y) =1$.
  Then, due to the $1$-Lipschitz condition, $\text{Im} (f_0|_{N(x)}) \subseteq \{ -1, 0, 1\} $ and $\text{Im} (f_0|_{N(y)}) \subseteq \{ 0, 1, 2\}$ where Im denotes the image of a function. 

\begin{lemma}
\label{lem:exterior}
    Let $G$ be a maximal outerplanar graph, and suppose $xy \in E(G)$ is an exterior edge such that $d_x \leq d_y$. Then, $\kappa_{LLY} (x,y) > 0 $ if and only if
    \[(d_x, d_y) \in \left\{ (2,2), (2,3), (2,4), (2,5), (2,6), (2,7), (2,8), (2,9), (3,3), (3,4) \right\}.\]
\end{lemma}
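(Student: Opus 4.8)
The plan is to compute $\kappa(x,y)$ exactly from the limit-free formula and then read off which degree pairs make it positive. Since $xy\in E(G)$ we have $d(x,y)=1$, so taking an integer-valued optimal $f$ normalized by $f(x)=0$ and $f(y)=1$ turns $\nabla_{xy}\Delta f$ into
\[
\Delta f(x)-\Delta f(y)=\frac{1}{d_x}\sum_{u\in N(x)}f(u)-\frac{1}{d_y}\sum_{u\in N(y)}f(u)+1 .
\]
As noted just before the lemma, $f|_{N(x)}$ takes values in $\{-1,0,1\}$ and $f|_{N(y)}$ in $\{0,1,2\}$, so computing $\kappa(x,y)$ becomes a finite linear program: I must pull the values on $N(x)$ down to $-1$ and push those on $N(y)$ up to $2$, as far as the $1$-Lipschitz constraints coupling the two neighborhoods allow.

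First I would pin down the local structure forced by maximal outerplanarity. The exterior edge $xy$ lies on a unique triangular face $xyz$, so $N(x,y)=\{z\}$; set $A=N(x)\setminus\{y,z\}$ and $B=N(y)\setminus\{x,z\}$, with $|A|=d_x-2$ and $|B|=d_y-2$. Viewing $G$ as a triangulated polygon, the chords $xz$ and $yz$ split it into two sub-polygons, one containing $A$ and the other containing $B$; thus $\{x,y,z\}$ separates $A$ from $B$, since any $A$--$B$ edge or any common neighbor outside $\{x,y,z\}$ would be a chord crossing $xz$ or $yz$. From this I extract the facts I need: there is no edge between $A$ and $B$; exactly one vertex $a_\ast\in A$ and one vertex $b_\ast\in B$ are adjacent to $z$ (a second such vertex would produce a $K_4$ with $x$ or $y$); and consequently $d(a,b)=3$ for every $a\in A$, $b\in B$ except $d(a_\ast,b_\ast)=2$, the latter realized through $z$.

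Next I would solve the program. Since $z\in N(x)\cap N(y)$ we have $f(z)\in\{0,1\}$; as the objective is linear and $f(z)$ appears with coefficient $\tfrac1{d_x}-\tfrac1{d_y}\ge0$, it suffices to optimize the two slices $f(z)=0$ and $f(z)=1$ and compare. For $f(z)=0$, set $f\equiv-1$ on $A$ and $f\equiv 2$ on $B$ except $f(b_\ast)=1$ (the edge $zb_\ast$ caps $b_\ast$ at $1$); the distance facts above make this assignment $1$-Lipschitz, giving $V(0)=\tfrac{3}{d_x}+\tfrac{5}{d_y}-2$. For $f(z)=1$, the edge $za_\ast$ forces $f(a_\ast)\ge0$, while $f\equiv-1$ on $A\setminus\{a_\ast\}$ and $f\equiv2$ on $B$ remain feasible, giving $V(1)=\tfrac{5}{d_x}+\tfrac{3}{d_y}-2$. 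As $d_x\le d_y$, we get $V(0)-V(1)=2\bigl(\tfrac1{d_y}-\tfrac1{d_x}\bigr)\le0$, so $\kappa(x,y)=V(0)=\tfrac{3}{d_x}+\tfrac{5}{d_y}-2$.

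Finally, $\kappa(x,y)>0$ exactly when $\tfrac{3}{d_x}+\tfrac{5}{d_y}>2$, and enumerating over $2\le d_x\le d_y$ yields $d_x=2$ with $d_y\le9$ and $d_x=3$ with $d_y\le4$, with no solutions for $d_x\ge4$ — precisely the list in the statement. The main obstacle is the structural step: justifying the separator and the resulting distances rigorously from outerplanarity, and then verifying that the proposed extremal $f$ is genuinely $1$-Lipschitz, the only delicate feasibility check being that a vertex of $A$ set to $-1$ and a vertex of $B$ set to $2$ lie at distance at least $3$, which is exactly the content of the separator structure; the distance-$2$ pair $a_\ast,b_\ast$ is what caps one value and produces the asymmetric numerator $\tfrac3{d_x}+\tfrac5{d_y}$. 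A secondary point is the degenerate cases $d_x=2$ or $d_y=2$, where $A$ or $B$ is empty and the formulas for $V(0),V(1)$ must be recomputed directly; one checks that the same criterion $\tfrac3{d_x}+\tfrac5{d_y}>2$ still governs positivity, so $(2,2),\dots,(2,9)$ survive and the list is unchanged.
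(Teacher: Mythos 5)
Your proposal is correct and follows essentially the same route as the paper: both normalize an integer-valued optimal $f$ with $f(x)=0$, $f(y)=1$, use maximal outerplanarity to pin down the local configuration around the triangle $xyz$ (at most one further common neighbor along $xz$ and along $yz$, distance $3$ between the remaining parts of the two neighborhoods), solve the resulting finite program by pushing values to their extremes, and enumerate; your generic formula $\kappa(x,y)=3/d_x+5/d_y-2$ matches the paper's Table~\ref{tab:exterior}. The only difference is organizational --- the paper tabulates exact (piecewise) curvature formulas for each configuration $(\delta_{xz},\delta_{yz})$, whereas you fold the degenerate $d_x=2$ cases into a deferred check --- and that check does go through, since in those cases the $f(z)=1$ slice gives $V(1)=3/d_y>0$ unconditionally, so positivity is still governed by $V(0)=3/d_x+5/d_y-2>0$.
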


\begin{table}[ht]
\begin{adjustbox}{width=0.895\columnwidth,center}
\begin{tabular}{|c|c|c|c|c|l|l|}
\hline
\begin{minipage}[c][0.25in][c]{1in} \centering $H_{xy}$ \end{minipage}& $\delta_{xz}$ & $d_x$ & $\delta_{yz}$ & $d_y$ & $\kappa (x,y)$ & Positive degree pairs  $(d_x, d_y)$ \\
\hline
\cT{
 \includegraphics[width=1in]{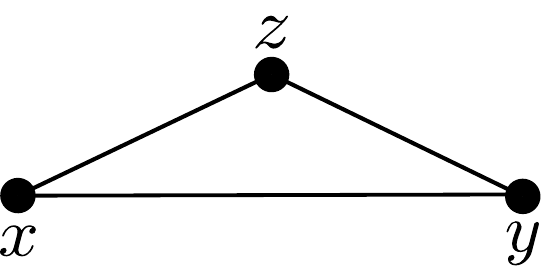}} & 0 & 2 & 0 & 2 & $ 3/d_x + 4/d_y - 2$ & $\{(2,2)\}$\\
\cT{\includegraphics[width=1in]{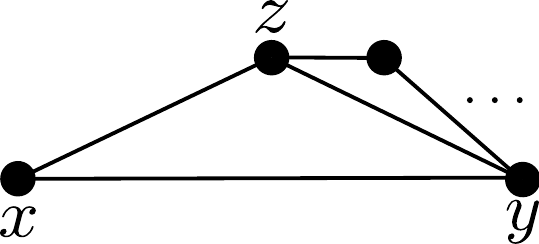}} & 0 & 2 & 1 &$\ge 3$ &  $
    \begin{cases}
        4/d_x + 3/d_y - 2 & \text{if } d_y < 2d_x\\
        3/d_x + 5/d_y - 2 & \text{if } 2d_x \leq d_y
    \end{cases}$ & $\{(2,3), (2,4), (2,5), (2,6), (2,7), (2,8), (2,9)\} $ \\
\cT{\includegraphics[width=1in]{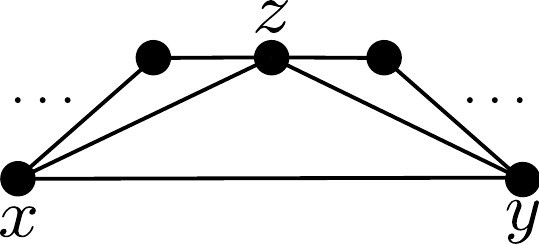}} & 1 & $\ge 3$ & 1& $\ge d_x$  & $ 3/d_x + 5/d_y - 2$ & $\{(3,3), (3,4)\}$ \\
\hline
\end{tabular}
\end{adjustbox}
\caption{Degree pairs for positively curved exterior edges $xy$ and local configuration $H_{xy}$.}
\label{tab:exterior}
\end{table}

\begin{proof}[Proof of Lemma \ref{lem:exterior}]
    Let $xy$ be an exterior edge of $G$. We will deduce optimal functions $f_0$ for the edge $xy$ in all possible local configurations around $xy$, that is, $f_0$ such that
\begin{equation}\label{eq:lem2}
\kappa(x,y) = \inf_{\substack{f \in \Lip(1),\\\nabla_{yx}f = 1}} \nabla_{xy} \Delta f = \nabla_{xy} \Delta f_0.
\end{equation}
Without loss of generality, we assign $f_0(x) =0$ and $f_0(y)=1$. Since $xy$ is an exterior edge and~$G$ is edge maximal, $N(x) \cap N(y) = \{z\}$ for some $z \in V(G)$. Then, since $f_0 \in \Lip(1)$, we must have $f_0(z) \in \{0, 1\}$. 
Let $\delta_{xz} = |N(x,z)\setminus\{y\}|$ and $\delta_{yz} = |N(y,z)\setminus\{x\}|$. Note that, since $G$ is outerplanar, $\delta_{xz}, \delta_{yz} \in \{ 0, 1\}$. If $\delta_{xz} = 1$, let $N(x,z)\setminus\{y\} = \{v_{xz}\}$. Similarly, if $\delta_{yz} = 1$, let $N(y,z)\setminus\{x\} = \{v_{yz}\}$.
Since $f_0(x)=0$ and $f_0(y)=1$, we then obtain that
\begin{align}
    \nabla_{xy} \Delta f_0 &= \frac{1}{d_x} \sum_{v \in N(x)} (f_0(v) - f_0(x)) - \frac{1}{d_y} \sum_{v \in N(y)} (f_0(v) - f_0(y))
    \nonumber \\
    &= \frac{1}{d_x} \sum_{v \in N(x)} f_0(v) + \frac{1}{d_y} \sum_{v \in N(y)} (1-f_0(v)).
    \label{eqn:expansion}
\end{align}

Observe that since $G$ is outerplanar, $xy$ is an exterior edge and $\{z\}=N(x,y)$, we have that the distance from any vertex in $N(x) \setminus\{v_{xz},y,z\}$ to any vertex in $N(y) \setminus \{v_{yz},x,z\}$ is $3$. Since $f_0$ is an optimal function for $xy$, we then obtain that $f_0(v) = -1 $ for all $v \in N(x) \setminus\{v_{xz},y,z\}$ and $f_0(v) = 2$ for all $v \in N(y) \setminus\{v_{yz},x,z\}$. 
Observe that if $\delta_{xz}=1$, i.e., $v_{xz}$ exists, then by the $1$-Lipschitz and integer-valued property of $f_0$, we have that $f_0(v_{xz})\in \{f_0(z),f_0(z)-1\}$. Now since $f_0$ is optimal, it is easy to see that $f_0(v_{xz})=f_0(z)-1$ so that $\Delta f_0(x)$ is minimal in \eqref{eq:lem2}. Similarly, if $\delta_{yz}=1$, then $f_0(v_{yz})=f_0(z)+1$.
Equation \eqref{eqn:expansion} can then be re-written as
\begin{align}
     \nabla_{xy} \Delta f_0 &=
     \frac{1}{d_x}\left[ \delta_{xz}f_0(v_{xz}) + f_0(z) + 1 - (d_x - 2 - \delta_{xz})\right] \nonumber \\
    &\quad\quad\quad + \frac{1}{d_y}\left[\delta_{yz}(1-f_0(v_{yz})) + (1-f_0(z)) + 1 - (d_y - 2 - \delta_{yz})\right]\nonumber \\
    &= \frac{1}{d_x}\left[ \delta_{xz}(f_0(z) - 1) + f_0(z) + 1 - (d_x - 2 - \delta_{xz})\right] \nonumber \\
    &\quad\quad\quad + \frac{1}{d_y}\left[ -\delta_{yz}f_0(z) + (1-f_0(z)) + 1 - (d_y - 2 - \delta_{yz})\right]\nonumber \\
    &= \left(\frac{1 + \delta_{xz}}{d_x} - \frac{1 + \delta_{yz}}{d_y}\right)f_0(z) + \frac{3}{d_x} + \frac{4 + \delta_{yz}}{d_y} - 2. \label{eqn:expansion2}
\end{align}

Note that $\frac{1 + \delta_{xz}}{d_x} - \frac{1 + \delta_{yz}}{d_y} < 0$ if and only if $\delta_{xz} = 0, \delta_{yz} = 1$ and $d_y < 2d_x$. 

\textit{Case 1}: $\frac{1 + \delta_{xz}}{d_x} - \frac{1 + \delta_{yz}}{d_y} < 0$. Then it is clear that $\nabla_{xy} \Delta f_0$ is minimized when $f_0(z)=1$. Moreover, since in this case $\delta_{xz}=0$, $f_0$ still satisfies the $1$-Lipschitz condition. 

\medskip

\textit{Case 2}:  $\frac{1 + \delta_{xz}}{d_x} - \frac{1 + \delta_{yz}}{d_y} \geq 0$. Then it is clear that $\nabla_{xy} \Delta f_0$ is minimized when $f_0(z)=0$. Suppose $\delta_{xz} = 1$ and $\delta_{yz} = 0$, then $d_x\geq 3$. Since $G$ is maximally outerplanar, $yz$ must be an exterior edge, which implies $d_y=2$, a contradiction of $d_x \leq d_y$. Hence, it follows that $(\delta_{xz},\delta_{yz}) \in \{(0,0), (0,1), (1,1)\}$. 
We list each local configuration based on $\delta_{xz}$ and $\delta_{yz}$ in \tabref{exterior} and characterize the possible degree pairs when $\kappa(x,y)>0$. It is not hard to check that $f_0$ can be made $1$-Lipschitz in each case. 
\end{proof}

\begin{lemma}
\label{lem:interior}
    Let $G$ be a maximal outerplanar graph, and suppose $xy \in E(G)$ is an interior edge such that $d_x \leq d_y$. A complete characterization of degree pairs $(d_x, d_y)$, depending upon the local configuration, such that $\kappa_{LLY} (x,y) > 0 $ is given by \tabref{interior}.
\end{lemma}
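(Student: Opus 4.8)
The plan is to follow the strategy of \lemref{exterior}, now adapted to the fact that an interior edge of a maximal outerplanar graph lies in exactly two triangles. First I would record the structural facts I need. Since $G$ is maximal outerplanar (a triangulation of a polygon) and $xy$ is interior, the edge $xy$ is a diagonal shared by exactly two triangles, so $N(x,y) = \{z_1,z_2\}$ consists of precisely two vertices (three would force a $K_{2,3}$, contradicting outerplanarity), and in particular $d_x, d_y \ge 3$. I would also use the standard fact that in a maximal outerplanar graph the neighborhood of every vertex induces a path (a ``fan''); in $x$'s fan the two common neighbors $z_1,z_2$ are exactly the two neighbors flanking $y$, and analogously in $y$'s fan. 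This lets me split $N(x)$ into $\{y,z_1,z_2\}$ together with ``other'' neighbors, of which at most two, $v_{xz_1}$ and $v_{xz_2}$, are adjacent to $z_1$ resp.\ $z_2$; the remaining ones are free.

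Next, exactly as in \lemref{exterior}, I would set $f_0(x)=0$ and $f_0(y)=1$, so that $f_0(z_1),f_0(z_2)\in\{0,1\}$. Writing $\delta_{xz_i} = |N(x,z_i)\setminus\{y\}|$ and $\delta_{yz_i} = |N(y,z_i)\setminus\{x\}|$, outerplanarity forces $\delta_{xz_i},\delta_{yz_i}\in\{0,1\}$, and when $\delta_{xz_i}=1$ (resp.\ $\delta_{yz_i}=1$) the unique witness $v_{xz_i}$ (resp.\ $v_{yz_i}$) exists. Arguing as before that an optimal $f_0$ pushes every free neighbor of $x$ to $-1$ and every free neighbor of $y$ to $2$, and that $f_0(v_{xz_i})=f_0(z_i)-1$ and $f_0(v_{yz_i})=f_0(z_i)+1$, I would expand $\nabla_{xy}\Delta f_0$ into the closed form
\[
\nabla_{xy}\Delta f_0 = \sum_{i=1}^{2}\left(\frac{1+\delta_{xz_i}}{d_x}-\frac{1+\delta_{yz_i}}{d_y}\right)f_0(z_i) + \frac{4}{d_x} + \frac{6+\delta_{yz_1}+\delta_{yz_2}}{d_y} - 2 .
\]
This is the two-apex analogue of the formula in \lemref{exterior}, and the infimum is attained by choosing each $f_0(z_i)\in\{0,1\}$ according to the sign of its coefficient. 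Just as in the exterior case, a short check shows the coefficient of $f_0(z_i)$ is negative only when $\delta_{xz_i}=0$, $\delta_{yz_i}=1$, and $d_y<2d_x$, so the value $f_0(z_i)=1$ is always compatible with the $1$-Lipschitz condition.

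Finally I would enumerate the local configurations. Up to swapping $z_1\leftrightarrow z_2$, a configuration is the pair of labels $(\delta_{xz_i},\delta_{yz_i})$ for $i=1,2$; I would discard the combinations excluded by the fan structure and the hypothesis $d_x\le d_y$ (for instance $d_x=3$ forces $\delta_{xz_1}=\delta_{xz_2}=0$, and $\delta_{yz_1}=\delta_{yz_2}=0$ forces $d_y=3$), substitute the optimized $f_0$ into the displayed formula, and solve the resulting inequality $\kappa(x,y)>0$ for $(d_x,d_y)$ in each remaining case. This produces the rows of \tabref{interior}.

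The main obstacle I anticipate is twofold and lies entirely in the last two steps: keeping the case analysis organized (there are more configurations than in the exterior case, since there are two triangle apices, each carrying its own pair $(\delta_{xz_i},\delta_{yz_i})$ and its own sign condition), and --- the genuinely delicate point --- certifying that the pointwise-optimal labeling extends to a globally $1$-Lipschitz function. For the latter I would use that $z_1$ and $z_2$ cut the local configuration into independent sub-polygons meeting only at an apex, so that no vertex labeled $-1$ on the $x$-side lies within distance $2$ of a vertex labeled $2$ on the $y$-side; the only distance-$2$ interactions pass through some $z_i$, and there the labels $f_0(z_i)-1$ and $f_0(z_i)+1$ differ by exactly $2$, which is admissible.
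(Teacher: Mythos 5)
Your proposal is correct and follows essentially the same route as the paper: identify the two common neighbors $\{w,z\}=N(x,y)$, push all non-common neighbors of $x$ to $-1$ and of $y$ to $2$ (with the triangle-witness vertices at $f_0(z_i)\mp 1$), derive the identical closed-form expression for $\nabla_{xy}\Delta f_0$, optimize each $f_0(z_i)\in\{0,1\}$ by the same sign condition ($\delta_{xz_i}=0$, $\delta_{yz_i}=1$, $d_y<2d_x$), and enumerate configurations, discarding the same impossible ones. Your explicit separation argument for the global $1$-Lipschitz extension is a slightly more careful articulation of what the paper states via the observation that $\{w,x\}$ separates $v_{xw}$ from $N(y)\setminus\{x,w\}$.
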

    \begin{table}[ht]
\begin{adjustbox}{width=0.895\columnwidth,center}

\begin{tabular}{|c|c|c|c|c|c|c|l|l|}
\hline
\begin{minipage}[c][0.25in][c]{1in} \centering $H_{xy}$ \end{minipage}&$\delta_{xw}$ & $\delta_{xz}$ & $d_x$& $\delta_{yw}$ & $\delta_{yz}$ &$d_y$& $\kappa (x,y)$ & Positive degree pairs  $(d_x, d_y)$\\
\hline
\cT{\includegraphics[width=1in]{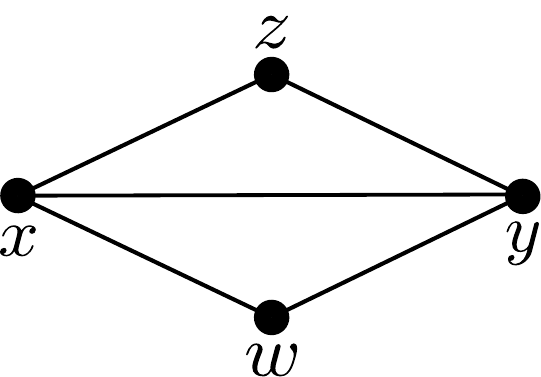}}&0 & 0 &3& 0 & 0 &3& $4/d_x + 6/d_y - 2$ & $\{(3,3)\}$\\
\cT{\includegraphics[width=1in]{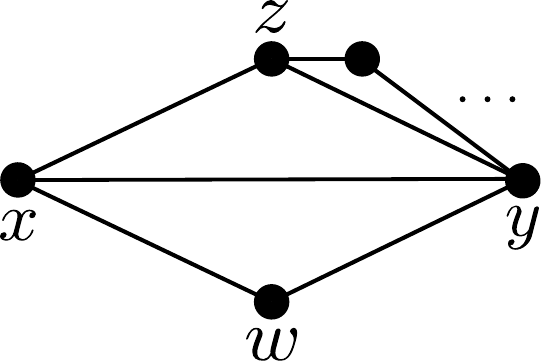}}&0 & 0 &3& 0 & 1 &$\ge 4$&  $\begin{cases}
        5/d_x + 5/d_y - 2 & \text{if } d_y < 2d_x\\
        4/d_x + 7/d_y - 2 & \text{if } 2d_x \leq d_y
    \end{cases}$ & $\{(3,4),(3,5),(3,6),(3,7),(3,8),(3,9)\}$\\
\cT{\includegraphics[width=1in]{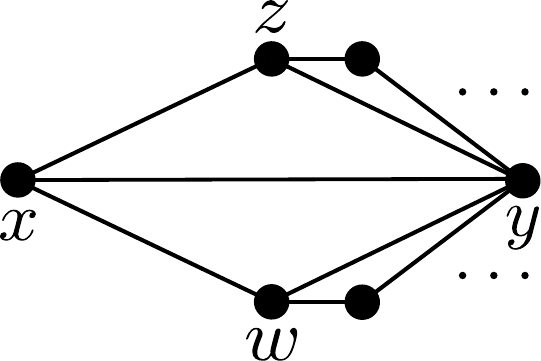}}&0 & 0 & 3& 1 & 1 & $\ge 5$&  $\begin{cases}
        6/d_x + 4/d_y - 2 & \text{if } d_y < 2d_x\\
        4/d_x + 8/d_y - 2 & \text{if } 2d_x \leq d_y
    \end{cases}$ & $\{(3,5), (3,6),(3,7),(3,8),(3,9)\}$\\
\cT{\includegraphics[width=1in]{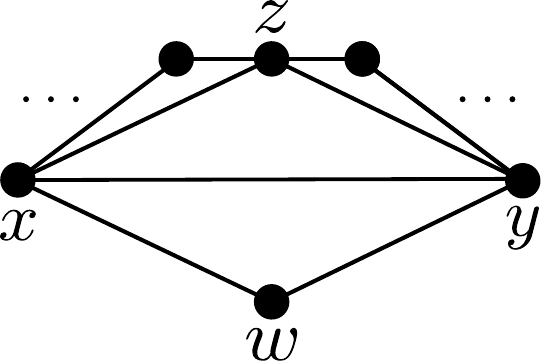}}&0 & 1 & $\ge 4$&0 & 1 &$\ge d_x$& $4/d_x + 7/d_y - 2$ & $\{(4,4),(4,5),(4,6),(5,5)\}$\\
\cT{\includegraphics[width=1in]{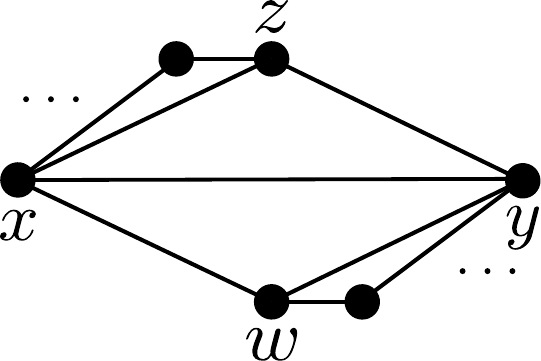}}&0 & 1 &$\ge 4$& 1 & 0 & $\ge d_x$&  $\begin{cases}
        5/d_x + 5/d_y - 2 & \text{if } d_y < 2d_x\\
        4/d_x + 7/d_y - 2 & \text{if } 2d_x \leq d_y
    \end{cases}$ & $\{(4,4),(4,5),(4,6)\}$\\
\cT{\includegraphics[width=1in]{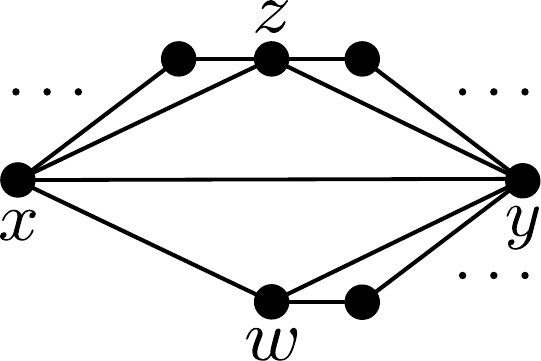}}&0 & 1 &$\ge$ 4 & 1 & 1 & $\ge \max\{5, d_x\}$&  $\begin{cases}
        5/d_x + 6/d_y - 2 & \text{if } d_y < 2d_x\\
        4/d_x + 8/d_y - 2 & \text{if } 2d_x \leq d_y
    \end{cases}$ & $\{(4,5),(4,6),(4,7),(5,5)\}$\\
\cT{\includegraphics[width=1in]{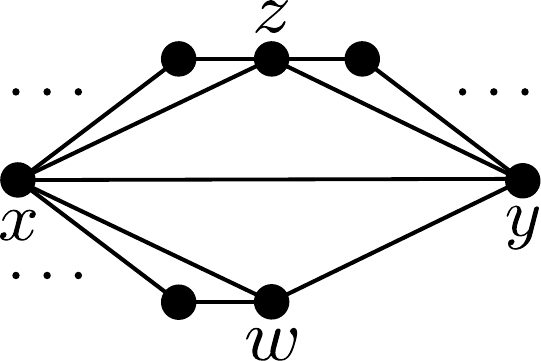}}&1 & 1 & $\ge$ 5 &0 & 1 &$\ge d_x$& $4/d_x + 7/d_y - 2$ & $\{(5,5)\}$\\
\cT{\includegraphics[width=1in]{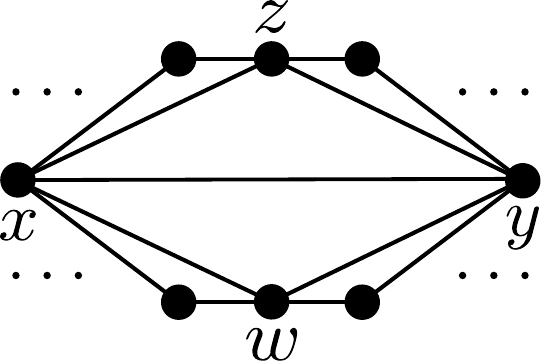}} &1 & 1 &$\ge$ 5 & 1 & 1 &$\ge d_x$& $4/d_x + 8/d_y - 2$ & $\{(5,5),(5,6)\}$\\
\hline
\end{tabular}
\end{adjustbox}
\caption{Positive degree pairs for interior edges $xy$ based on the local configuration $H_{xy}$.}
\label{tab:interior}
\end{table}

\begin{proof}[Proof of Lemma \ref{lem:interior}]
 Let $xy$ be an interior edge of $G$. We will deduce optimal functions $f_0$ for the edge $xy$ in all possible local configurations around $xy$. Similar to before, without loss of generality, we assume that $f_0(x) =0$ and $f_0(y)=1$.

Since $xy$ is an interior edge and $G$ is edge maximal, $N(x) \cap N(y) = \{w,z\}$ for some $w,z \in V(G)$ and by the $1$-Lipschitz condition we have $f_0(w), f_0(z) \in \{0, 1\}$. Similar to before, Equation \eqref{eqn:expansion} still holds, i.e., 
\begin{align}
    \nabla_{xy} \Delta f_0 &= \frac{1}{d_x} \sum_{v \in N(x)} (f_0(v) - f_0(x)) - \frac{1}{d_y} \sum_{v \in N(y)} (f_0(v) - f_0(y))\nonumber\\
    &= \frac{1}{d_x} \sum_{v \in N(x)} f_0(v) + \frac{1}{d_y} \sum_{v \in N(y)} (1-f_0(v)).
    \label{eqn:secondexpansion}
\end{align}

Define $\delta_{xw} = |N(x,w)\setminus\{y\}|$, $\delta_{xz} = |N(x,z)\setminus\{y\}|$, $\delta_{yw} = |N(y,w)\setminus\{x\}|$ and $\delta_{yz} = |N(y,z)\setminus\{x\}|$. 
Since $G$ is outerplanar, $\delta_{xw}, \delta_{xz}, \delta_{yw}, \delta_{yz} \in \{ 0, 1\}$. For $a\in \{x,y\}$ and $b\in \{w,z\}$, if $\delta_{ab}=1$, then let $N(a,b)\backslash \{x,y\} = \{v_{ab}\}$.

Observe that in \eqref{eqn:secondexpansion}, $\nabla_{xy} \Delta f_0$ is minimized when $f_0(v)$ is as small as possible for $v\in N(x)$ and as big as possible for $v\in N(y)$ while keeping $f_0$ $1$-Lipschitz. Note that since $G$ is outerplanar, $\{w,x\}$ separates $v_{xw}$ (if it exists) with vertices in $N(y)\backslash \{x,w\}$. Similar to Lemma \ref{lem:exterior}, if $\delta_{xw}=1$, then $f_0(v_{xw}) = f_0(w)-1$ due to the optimality of $f_0$. Similarly, $f_0(v_{xz}) = f_0(z) - 1$, $f_0(v_{yw}) = f_0(w)+1$, $f_0(v_{yz}) = f_0(z) + 1$ if $v_{xz}, v_{yw}, v_{yz}$ exist respectively. 

Moreover, observe that the distance from any vertex in $N(x) \setminus\{v_{xw}, w, y, z, v_{xz}\}$ to any vertex $N(y) \setminus \{v_{yw},w,x,z, v_{yz}\}$ is $3$. Therefore, since $f_0$ is optimal, we obtain that $f_0(v) = -1 $ for all $v \in N(x) \setminus\{v_{xw}, w, y, z, v_{xz}\}$ and $f_0(v) = 2$ for all $v \in N(y) \setminus\{v_{yw},w,x,z, v_{yz}\}$.
Then Equation \eqref{eqn:secondexpansion} becomes
\begin{align*}
    \nabla_{xy} \Delta f_0 &= \frac{1}{d_x}\left[ \delta_{xw}(f_0(w) - 1) + f_0(w) + 1 + f_0(z) + \delta_{xz}(f_0(z) - 1) - (d_x - 3 - \delta_{xw} - \delta_{xz})\right] \\
    &\quad\quad\quad + \frac{1}{d_y}\left[ -\delta_{yw}f_0(w) + (1-f_0(w)) + 1 + (1-f_0(z)) - \delta_{yz}f_0(z) - (d_y - 3 - \delta_{yw} - \delta_{yz})\right]\\
    &= \left(\frac{1 + \delta_{xw}}{d_x} - \frac{1 + \delta_{yw}}{d_y}\right)f_0(w) + \left(\frac{1 + \delta_{xz}}{d_x} - \frac{1 + \delta_{yz}}{d_y}\right)f_0(z) + \frac{4}{d_x} + \frac{6 + \delta_{yw} + \delta_{yz}}{d_y} - 2.
\end{align*}

Note that $\frac{1 + \delta_{xw}}{d_x} - \frac{1 + \delta_{yw}}{d_y} < 0$ if and only if $\delta_{xw} = 0, \delta_{yw} = 1$ and $d_y < 2d_x$. In that case, since $f_0$ is optimal, it must be the case that $f_0(w) = 1$. On the other hand, if $\frac{1 + \delta_{xw}}{d_x} - \frac{1 + \delta_{yw}}{d_y}$ is nonnegative, $\nabla_{xy} \Delta f_0$ is minimized when $f_0(w) = 0$.

Similarly, note that  $\frac{1 + \delta_{xz}}{d_x} - \frac{1 + \delta_{yz}}{d_y} < 0$ if and only if $\delta_{xz} = 0, \delta_{yz} = 1$ and $d_y < 2d_x$. In that case, it must happen that $f_0(z) = 1$ as before. On the other hand, if $\frac{1 + \delta_{xz}}{d_x} - \frac{1 + \delta_{yz}}{d_y} \geq 0$, we may assume $f_0(z) = 0$.  Note that the case when $\delta_{xw} + \delta_{xz} \geq 1$ and $\delta_{yw} = \delta_{yz} = 0$ is impossible since $d_x \leq d_y$. We list each local configuration (up to symmetry) for $\delta_{xw}, \delta_{xz}, \delta_{yw}$ and $\delta_{yz}$ in \tabref{interior} and compute the possible degree pairs when $\kappa(x,y)>0$.
\end{proof}

\if0
 $\begin{cases}
        5/d_x + 6/d_y - 2 & \text{if } d_y < 2d_x\\
        4/d_x + 8/d_y - 2 & \text{if } 2d_x \leq d_y
    \end{cases}$
\fi
\section{Bounding the Order}
In this section, we bound the order of a positively curved maximal outerplanar graph~$G$. Note that since $G$ is maximally outerplanar, if $|V(G)|\geq 3$, then $G$ is connected and $\delta(G) \geq 2$. Given two graphs $A$ and $B$, let the \textit{join} of $A$ and $B$, denoted by $A\vee B$, be the graph obtained from $A \cup B$ by connecting each vertex in $A$ with each vertex in $B$. We begin with the following structural lemma. 
\begin{lemma}
\label{lem:fansub}
Let $G$ be a maximal outerplanar graph and suppose $x \in V(G)$. Then, $G[N[x]] \cong K_1 \vee P_{d_x}$.
\end{lemma}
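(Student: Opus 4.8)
The plan is to reduce the statement to a claim about the neighborhood alone. Since $x$ is adjacent to every vertex of $N(x)$, the induced subgraph $G[N[x]]$ is exactly the join $\{x\} \vee G[N(x)]$, so it suffices to prove that $G[N(x)] \cong P_{d_x}$, i.e.\ that the neighbors of $x$ induce a path on $d_x$ vertices. I may assume $d_x \geq 2$, since a maximal outerplanar graph on at least $3$ vertices is $2$-connected with $\delta(G) \geq 2$, and the remaining tiny cases are immediate. The key structural input is the standard characterization of maximal outerplanar graphs as triangulations of a polygon: in the (essentially unique) outerplanar embedding, the boundary of the outer face is a Hamiltonian cycle $C$ and every bounded face is a triangle. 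I would fix such an embedding at the outset.

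Next I would extract the path. Since $x$ lies on the outer face, exactly one of the faces incident to $x$ is the unbounded outer face, and its two bounding edges at $x$ are the exterior edges $xu_1$ and $xu_{d_x}$, where $u_1$ and $u_{d_x}$ are the two neighbors of $x$ along $C$. List the edges incident to $x$ in rotational (clockwise) order as $xu_1, xu_2, \ldots, xu_{d_x}$, so that the outer-face ``gap'' is the one between $xu_{d_x}$ and $xu_1$. Every other consecutive pair $xu_i, xu_{i+1}$ (for $1 \leq i \leq d_x - 1$) bounds a \emph{bounded} face, which by maximality is a triangle; its third vertex is forced to be a common neighbor of $x$, so the face is the triangle $x u_i u_{i+1}$ and hence $u_i u_{i+1} \in E(G)$. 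This exhibits the path $u_1 u_2 \cdots u_{d_x}$ inside $G[N(x)]$.

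Finally I would rule out any additional edges among $N(x)$, using a forbidden-minor argument rather than delicate topology. Suppose some chord $u_i u_j$ with $j \geq i+2$ were present. Contracting the internal subpath $u_{i+1} u_{i+2} \cdots u_{j-1}$ to a single vertex $P'$ produces a $K_4$ minor on $\{x, u_i, u_j, P'\}$: the pairs $xu_i$, $xu_j$, $xP'$ survive because $x$ is adjacent to every $u_\ell$, the pairs $u_iP'$ and $u_jP'$ survive via the path edges $u_iu_{i+1}$ and $u_{j-1}u_j$, and $u_iu_j$ is the assumed chord. Since outerplanar graphs are $K_4$-minor-free, this is a contradiction, so $G[N(x)]$ has no chords and equals $P_{d_x}$; joining $x$ gives $G[N[x]] \cong K_1 \vee P_{d_x}$. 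The step I expect to be the main obstacle is making the second paragraph fully rigorous — namely justifying that each bounded face at $x$ is a triangle spanned by rotationally consecutive neighbors, and that the outer face accounts for exactly one gap in the rotation — for which I would lean on the polygon-triangulation characterization of maximal outerplanar graphs and a careful treatment of the $d_x = 2$ boundary case.
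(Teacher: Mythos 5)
Your proof is correct, but it takes a genuinely different route from the paper's. The paper's argument is abstract and very short: outerplanarity forces $G[N(x)]$ to be a disjoint union of paths (a cycle in the neighborhood together with $x$ would give a $K_4$ minor, and a vertex of degree $3$ in $G[N(x)]$ together with $x$ would give a $K_{2,3}$ minor), and edge-maximality then forces this union to be a single path, since otherwise one could add an edge merging two path components without violating outerplanarity. You instead fix the outerplanar embedding, invoke the polygon-triangulation characterization of maximal outerplanar graphs, read the path edges off the bounded triangular faces in the rotation at $x$, and eliminate chords with a $K_4$-minor contraction. Both arguments are sound; the step you flagged as the main obstacle (that the outer face accounts for exactly one gap in the rotation at $x$) is handled by $2$-connectivity, since every face boundary of a $2$-connected plane graph is a cycle and hence visits $x$ at most once. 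What the paper's approach buys is brevity and independence from any embedding, at the cost of leaving two facts to the reader (that outerplanar neighborhoods induce unions of paths, and that the merging edge can be added outerplanarly); what yours buys is explicitness --- it identifies the path concretely as the rotational order of the neighbors around $x$, which is precisely the local picture the paper relies on later when labeling the configurations $H_{xy}$, and every nontrivial claim is reduced to the single standard fact that outerplanar graphs are $K_4$-minor-free.
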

\begin{proof}
 Since $G$ is outerplanar, $G[N(x)]$ must be a disjoint union of paths. Moreover, since $G$ is edge-maximal, $G[N(x)]$ must be a single path (otherwise we could add an edge to merge two paths into one without violating the outerplanarity condition). Thus $G[N[x]] \cong K_1 \vee P_{d_x}$.
\end{proof}

\begin{proof}[Proof of Theorem \ref{thm:main}]
    We will show Theorem \ref{thm:main} by induction on $|V(G)|$. For the base case, we consider~$|V(G)| = 11$. Using SageMath, we verify that all $228$ maximal outerplanar graphs on $11$ vertices are non-positively curved.\footnote{The SageMath code used to verify is available at: \href{https://github.com/ghbrooks28/outerplanar-LLY/}{https://github.com/ghbrooks28/outerplanar-LLY/}.} 
    
    Now consider $G$, a maximal outerplanar graph on $n\geq 12$ vertices. Assume, towards a contradiction, that $G$ is positively curved. Let $w \in V(G)$ such that $d_w = 2$ (such $w$ exists since every maximal outerplanar graph contains a vertex of degree $2$). Note that~$G - w$ is maximally outerplanar. It follows from the induction hypothesis that~$G-w$ is not positively curved, thus there exists some $xy\in E(G)$ with $\kappa(x,y)\leq 0$. Without loss of generality, assume that $d_x\leq d_y$.
    We write $d_v$ for the degree of $v$ in $G$, and we write $\Tilde{d}_v$ for the degree of $v$ in $G-w$. 
    Since $xy$ is non-positively curved in $G-w$, \lemrefTwo{exterior}{interior} exactly characterize the possible values of $(\Tilde{d}_x, \Tilde{d}_y)$; we call degree pairs corresponding to non-positively curved edges \emph{bad} degree pairs and degree pairs corresponding to positively curved edges \emph{good} degree pairs.

    First, suppose $ w\not \in N(x) \cup N(y)$, so that $(d_x , d_y) = (\Tilde{d}_x, \Tilde{d}_y)$. Since  $(\Tilde{d}_x, \Tilde{d}_y)$ is a bad degree pair and the degrees of $x$ and $y$ are the same in $G$ and $G-w$, we have that $(d_x, d_y)$ is a bad degree pair, a contradiction.
    Next, suppose $w \in N(x)\setminus N(y)$ so that $(d_x, d_y) = (\Tilde{d}_x + 1, \Tilde{d}_y)$. Notice that since $w \in N(x)\setminus N(y)$, $xy$ is an exterior (or interior) edge in $G$ if and only if it is an exterior (respectively, interior) edge in $G-w$. Examining \tabref{exterior} and \tabref{interior}, we see that, given a degree pair $(a,b)$ that is bad for exterior (or interior) edges, the degree pair $(a+1, b)$ is also bad for exterior (respectively, interior) edges.
    Thus, $(d_x, d_y)$ is a bad degree pair in $G$, contradicting our assumption that $G$ is positively curved. The case when $w \in N(y) \setminus N(x)$ is similar.

        The remainder of the proof addresses the final case, where $w \in N(x,y)$, so that $(d_x , d_y) = (\Tilde{d}_x +1, \Tilde{d}_y+1)$. Note that in this case, $xy$ has to be exterior edge of $G-w$. Since $xy$ is non-positively curved in $G-w$, we obtain from \lemref{exterior} that 
        \begin{equation}\label{eq:exterior_tilde}
            (\Tilde{d}_x, \Tilde{d}_y) \not\in \{(2,2),(2,3),(2,4),(2,5),(2,6),(2,7),(2,8),(2,9),(3,3),(3,4)\}.
        \end{equation} 

    Note that by Theorem \ref{thm:maxdeg}, $\Delta(G)\leq 9$. It follows that $\max\{\Tilde{d}_y, \Tilde{d}_x\} \leq 8$.
    By Equation \eqref{eq:exterior_tilde}, it suffices to consider $xy$ such that $\Tilde{d}_x \geq 3$ and $\Tilde{d}_y \geq 4$, where $\Tilde{d}_y > 4$ if $\Tilde{d}_x = 3$. Then by Lemma \ref{lem:interior}, $(d_x, d_y)=(\Tilde{d}_x +1, \Tilde{d}_y+1)$ is a bad degree pair (and so $G$ would be non-positively curved) for all values except $(d_x, d_y) \in \{ (4,6), (5,5), (4,7), (5,6)\}$. However, note that when $(d_x,d_y) = (4,7)$ or $(5,6)$, $\max\{\delta_{xw},\delta_{yw}\}=1$ (see \tabref{interior} in Lemma \ref{lem:interior}), which contradicts that $d_w=2$. Hence we are left with two cases when $(d_x, d_y) \in \{ (4,6), (5,5)\}$. We will examine the local configuration $H_{xy}$ below.

    \medskip
    
    \textit{Case 1:} Suppose $(d_x, d_y) = (4, 6).$ Let $C \subseteq G$ be the outer face cycle in $G$. Using \lemref{fansub}, we have the local configuration $H_{xy}$, labeled as shown in \figref{case1}. Let $x' \in N_C(x) \setminus \{w\}$ and $y',v,u\in N_C(y)\setminus \{w\}$ such that $y'vuzxw$ is the path induced by $N(y)$. Since $d_y = 6,$ $yy'$ is an exterior edge, and $yz, yu, yv$ are interior edges. Note that $G$ is assumed to be positively curved. Thus the degree pairs of $yy', yz, yu$ and $yv$ all have to be good. By Lemma \ref{lem:exterior} and Lemma \ref{lem:interior}, we have $d_{y'} = 2$, $d_z \in \{4,5\},$ and $d_u, d_v \in \{3,4,5\}.$ Since $d_x = 4,$ and $xx'$ is an exterior edge, $d_{x'} \in \{ 2,3\}$. 
    
    We first claim that $d_{x'}=2$. Otherwise, if $d_{x'} = 3$, then there exists $x'' \in V(G)\setminus\{x\}$ such that $x'' \in N(x',z),$ which implies $d_z \geq 5$. By Lemma \ref{lem:exterior} and Lemma \ref{lem:interior}, $zu$ must then be an interior edge, which implies that $d_z\geq 6$. But now $(d_y, d_z)$ is a bad degree pair, contradicting that $yz$ is positively curved. Hence $d_{x'}=2$. 
    
    But now if we repeat the argument by considering $G-x'$ instead of $G-w$, we obtain contradictions unless $(d_x, d_z) \in \{ (4,6), (5,5)\}$, which is impossible in this case since $d_x=4$ and $d_z < 6$. 
    
    \begin{figure}[h]
    \centering
      \includegraphics[width=.15\linewidth]{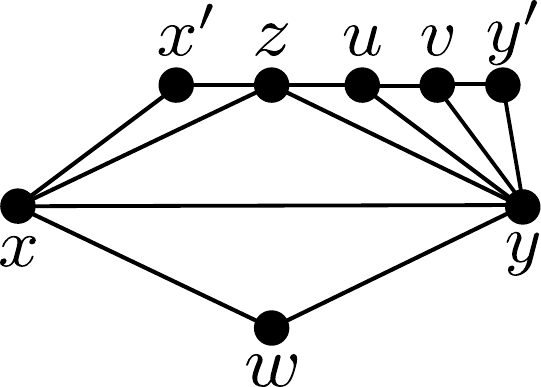}
      \caption{$H_{xy}$ in Case 1.}
      \label{fig:case1}
    \end{figure}
    
    
    \textit{Case 2:} Suppose $(d_x, d_y) = (5, 5).$ Let $C \subseteq G$ be the outer face cycle. Let $x',u\in N_C(x) \setminus \{w\}$ such that $x'uzyw$ is the path induced by $N(x)$ (see \figref{case2}). Since $d_x= 5,$ and $xx'$ is an exterior edge, it follows from Lemma \ref{lem:exterior} that $d_{x'}= 2$.
    Now if we repeat the argument by considering $G-x'$ instead of $G-w$, we obtain contradictions unless $(d_x,d_u)= (5,5)$. However, this is impossible since by the configurations in Lemma \ref{lem:interior}, if $(d_x,d_u)= (5,5)$, then $d_{x'}>2$, giving a contradiction.  
    \begin{figure}[h!]
    \centering
      \includegraphics[width=.15\linewidth]{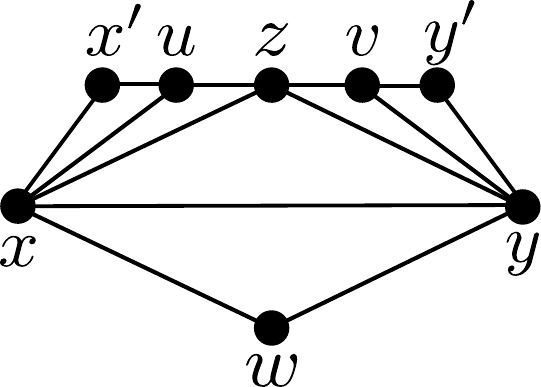}
      \caption{$H_{xy}$ in Case 2.}
      \label{fig:case2}
    \end{figure}

\noindent This completes the proof of Theorem \ref{thm:main}.
    \end{proof}

\section*{Acknowledgments}
Most of the research presented in this paper was conducted at the 2023 MRC
Summer Conference: Ricci Curvatures of Graphs and Applications to Data Science,
supported by the National Science Foundation under Grant Number 1916439. We
sincerely thank the organizers. Anna Schenfisch was partially supported by the Dutch Research Council
(NWO) under project no. P21-13. 
Zhiyu Wang was partially supported by the LA Board of Regents grant LEQSF(2024-27)-RD-A-16.
Jing Yu was partially supported by the NSF
CAREER grant DMS-2239187 (PI: Anton Bernshteyn) during her residency at School
of Mathematics, Georgia Institute of Technology.

We would like to sincerely thank the anonymous referees for their valuable comments and suggestions that greatly improved the manuscript.

\begin{thebibliography}{10}

\bibitem{Bakry-Emery}
D. Bakry and M. \'Emery, Diffusions hypercontractives, S\'eminaire de probabilit\'es, XIX, 1983/84, 177--206, Lecture Notes in Math. 1123, Springer, Berlin, 1985.

\bibitem{BM2015}
B. Bhattacharya and S. Mukherjee, Exact and asymptotic results on coarse Ricci curvature of graphs, \textit{Discrete Math.}, \textbf{338(1)} (2015), 23--42.


\bibitem{Chen-Chen08}
B. Chen and G. Chen, Gauss--Bonnet formula, finiteness condition, and characterizations of graphs embedded in surfaces, \textit{Graphs Combin.}, \textbf{24(3)} (2008), 159--183.


\bibitem{Chung-Yau96}
F. Chung and S.-T. Yau, Logarithmic Harnack inequalities, \textit{Math. Res. Lett.} \textbf{3} (1996), 793--812.

\bibitem{Devriendt-Lambiotte2022}
K. Devriendt and R. Lambiotte, Discrete curvature on graphs from the effective resistance, \textit{J. Phys.: Complex.}, \textbf{3(2)} (2022), 025008.


\bibitem{DeVos-Mohar07}
M. DeVos and B. Mohar, An analogue of the Descartes--Euler formula for infinite graphs and Higuchi’s conjecture, \textit{Trans. Amer. Math. Soc.}, \textbf{359(7)} (2007), 3287--3300.

\bibitem{Forman2003}
R. Forman, Bochner’s method for cell complexes and combinatorial Ricci curvature, \textit{Discrete Comput. Geom.}, \textbf{29} (2003), 323–-374.

\bibitem{GLLY2023+}
E. Gamlath, X. Liu, L. Lu and X. Yuan, A tight bound on $\{C_3, C_5\}$-free connected graphs with positive Lin--Lu--Yau Ricci curvature, \textit{arXiv:2312.16593}.


\bibitem{Ghidelli17}
L. Ghidelli, On the largest planar graphs with everywhere positive combinatorial
curvature, \textit{J. Combin. Theory Ser. B.}, \textbf{158(2)} (2023), 226--263.

\bibitem{Higuchi01}
Y. Higuchi, Combinatorial curvature for planar graphs, \textit{J. Graph Theory} \textbf{38(4)} (2001), 220--229.

\bibitem{Hua-Su2019}
B. Hua and Y. Su, The set of vertices with positive curvature in
a planar graph with nonnegative curvature, \textit{Adv. Math.} \textbf{343} (2019), 789--820.




\bibitem{LLY}
Y. Lin, L. Lu and S.-T. Yau, Ricci curvature of graphs, \textit{Tohoku Math. J.} \textbf{63} (2011) 605--627.

\bibitem{LY}
Y. Lin and S.-T. Yau, Ricci curvature and eigenvalue estimate on locally finite graphs, \textit{Math. Res. Lett.} \textbf{17} (2010), 345-–358.

\bibitem{Lu-Wang2023+}
L. Lu and Z. Wang, On the size of planar graphs with positive Lin--Lu--Yau Ricci curvature, \textit{arXiv:2010.03716}.




\bibitem{MW}
F. M\"{u}nch and R. Wojciechowski, Ollivier Ricci curvature for general graph Laplacians: Heat equation, Laplacian comparison, non-explosion and diameter bounds, \textit{Adv. Math.}, \textbf{356} (2019).

\bibitem{Nicholson-Sneddon11}
R. Nicholson and J. Sneddon, New graphs with thinly spread positive combinatorial curvature
\textit{New Zealand J. Math.}, \textbf{41} (2011), 39--43.

\bibitem{Oh17}
B.-G. Oh, On the number of vertices of positively curved planar graphs, \textit{Discrete Math.}, \textbf{340(6)} (2017), 1300–-1310.


\bibitem{Ollivier}
Y. Ollivier, Ricci curvature of Markov chains on metric spaces, \textit{J. Funct. Anal.} \textbf{256}
(2009), 810–-864.

\bibitem{RBK05}
T. R\'eti, E. Bitay and Z. Kosztol\'anyi, On the polyhedral graphs with positive combinatorial curvature, \textit{Acta Polytech. Hungar.}, \textbf{2(2)} (2005), 19--37.


\bibitem{Sammer}
M.D. Sammer. Aspects of mass transportation in discrete concentration inequalities. PhD thesis, Georgia Institute of Technology, 2005.



\bibitem{Sun-Yu04}
L. Sun and X. Yu, Positively curved cubic plane graphs are finite, \textit{J. Graph Theory} \textbf{47(4)} (2004), 241--274.

\bibitem{Zhang08}
L. Zhang, A result on combinatorial curvature for embedded graphs on a surface, \textit{Discrete Math.}, \textbf{308(24)} (2008), 6588--6595.

\end{thebibliography}

\end{document}